\documentclass[preprint, 11pt, authoryear]{elsarticle}

\usepackage{amssymb}
\usepackage{amsmath}
\usepackage{amsthm}
\newtheorem{definition}{Definition}[section]
\newtheorem{remark}{Remark}[section]
\newtheorem{proposition}{Proposition}[section]
\newtheorem*{proposition*}{Proposition}

\newtheorem{lemma}{Lemma}[section]
\newtheorem{example}{Example}[section]

\usepackage{graphicx}
\usepackage{comment}
\usepackage{mathrsfs}
\usepackage{multirow}
\usepackage{booktabs}
\usepackage{xcolor}
\usepackage{tikz}
\usetikzlibrary{hobby}
\usetikzlibrary{shapes, fit, positioning, backgrounds}
\usepackage{hyperref}
\usepackage{enumitem}
\usepackage[normalem]{ulem}
\usepackage{todonotes}
\usepackage{subcaption}
\usepackage[margin=2cm]{geometry}
\usepackage{setspace}
\usepackage{rotating}

\usepackage[nofillcomment,noend,linesnumbered]{algorithm2e}

\SetCommentSty{mycommfont}
\RestyleAlgo{ruled}

\newcommand{\hip}{{\mathcal H}}
\newcommand{\VV}{{\mathcal V}}
\newcommand{\EE}{{\mathcal E}}
\newcommand{\PP}{{\mathcal P}}
\newcommand{\QQ}{{\mathcal Q}}

\newcommand{\N}{{\mathbb N}}
\newcommand{\R}{{\mathbb R}}
\newcommand{\Z}{{\mathbb Z}}
\newcommand{\CG}[1]{G(#1)}

\newcommand{\STAB}{{\mathscr S}}
\newcommand{\card}[1]{\left| #1 \right|}
\newcommand{\cn}[1]{\chi(#1)}

\newcommand{\cfcn}[1]{\chi_{CF}(#1)}
\newcommand{\cfcnT}[1]{\chi_{CF}^{T}(#1)}
\newcommand{\dpcn}[1]{\chi_{DP}(#1)}
\newcommand{\newproblem}{\text{DPCP}}
\newcommand{\setcoveringform}{\text{DPCP-SCF}}
\newcommand{\setcoveringformLR}{\text{LR}}
\newcommand{\setcoveringformLRM}{\text{LR}^{\text{M}}_{\text{ext}}}
\newcommand{\setcoveringformLRext}{\text{LR}_{\text{ext}}}
\newcommand{\pricingform}{\text{DPCP-PRI}}

\journal{European Journal of Operation Research}

\begin{document}

\begin{frontmatter}

%% Title, authors and addresses

%% use the tnoteref command within \title for footnotes;
%% use the tnotetext command for theassociated footnote;
%% use the fnref command within \author or \affiliation for footnotes;
%% use the fntext command for theassociated footnote;
%% use the corref command within \author for corresponding author footnotes;
%% use the cortext command for theassociated footnote;
%% use the ead command for the email address,
%% and the form \ead[url] for the home page:
%% \title{Title\tnoteref{label1}}
%% \tnotetext[label1]{}
%% \author{Name\corref{cor1}\fnref{label2}}
%% \ead{email address}
%% \ead[url]{home page}
%% \fntext[label2]{}
%% \cortext[cor1]{}
%% \affiliation{organization={},
%%            addressline={}, 
%%            city={},
%%            postcode={}, 
%%            state={},
%%            country={}}
%% \fntext[label3]{}

\title{
Double-Partition Coloring problem: a unified approach for conflict-free coloring of hypergraphs and other coloring problems
in graphs
}
%% Article title

%% use optional labels to link authors explicitly to addresses:
%% \author[label1,label2]{}
%% \affiliation[label1]{organization={},
%%             addressline={},
%%             city={},
%%             postcode={},
%%             state={},
%%             country={}}
%%
%% \affiliation[label2]{organization={},
%%             addressline={},
%%             city={},
%%             postcode={},
%%             state={},
%%             country={}}

\author[UNR,CONICET]{Mauro Lucci\corref{cor}}
\ead{mlucci@fceia.unr.edu.ar}

\author[UNR,CONICET]{Graciela Nasini}
\ead{nasini@fceia.unr.edu.ar}

\author[UNR,CONICET]{Paola Tolomei}
\ead{ptolomei@fceia.unr.edu.ar}

\author[EPN]{Luis Miguel Torres}
\ead{luis.torres@epn.edu.ec}

%% use the corref command within \author for corresponding author footnotes;
\cortext[cor]{Corresponding author}

%% Author affiliation
\affiliation[UNR]{organization={Depto. de Matemática (FCEIA), Universidad Nacional de Rosario},
            addressline={Pellegrini 250}, 
            city={Rosario},
            postcode={2000}, 
            state={Santa Fe},
            country={Argentina}}
\affiliation[CONICET]{organization={CONICET},
            addressline={Ocampo y Esmeralda}, 
            city={Rosario},
            postcode={2000}, 
            state={Santa Fe},
            country={Argentina}}
\affiliation[EPN]{organization={MODEMAT, Escuela Politécnica Nacional},
            addressline={Av. Ladrón de Guevara E11-253}, 
            city={Quito},
            postcode={170109}, 
            state={Pichincha},
            country={Ecuador}}

%% Abstract
\begin{abstract}

The Double-Partition Coloring Problem (DPCP) is a recently introduced generalization of several coloring problems on graphs and hypergraphs, including the Partition Coloring Problem, the List Coloring Problem, and the Conflict-Free Coloring Problem (CFCP). In this work, we further investigate the structural and computational properties of the DPCP and develop exact and heuristic solution approaches. We derive structural results that allow the detection of infeasible instances and the reduction of their size. We propose one-step and two-step heuristics for obtaining high-quality solutions. These procedures are incorporated into an enhanced branch-and-price algorithm based on a set covering formulation of the DPCP. The algorithm further includes new strategies for solving the NP-hard pricing problem. Computational experiments show that the proposed branch-and-price algorithm and a compact integer programming formulation complement each other: while the compact formulation is generally more effective on sparse instances, branch-and-price performs particularly well on denser and larger instances. In particular, these are the first exact algorithms to be computationally evaluated on various CFCP instances.
\end{abstract}

%%Graphical abstract
% \begin{graphicalabstract}
% %\includegraphics{grabs}
% \end{graphicalabstract}

% %Research highlights
% \begin{highlights}
% \item Research highlight 1
% \item Research highlight 2
% \end{highlights}

%% Keywords
\begin{keyword}
%% keywords here, in the form: keyword \sep keyword
conflict-free coloring \sep branch-and-price \sep coloring problems \sep integer programming

%% PACS codes here, in the form: \PACS code \sep code

%% MSC codes here, in the form: \MSC code \sep code
%% or \MSC[2008] code \sep code (2000 is the default)

\end{keyword}

\end{frontmatter}

%% Add \usepackage{lineno} before \begin{document} and uncomment 
%% following line to enable line numbers
%% \linenumbers

%% main text
%%

\section{Introduction}
\label{sec.introduction}

The \emph{Double-Partition Coloring Problem} (DPCP) was recently introduced by \cite{LucciEtAl2025} as a unifying framework for several graph coloring problems.
Given a graph $G=(V,E)$ and two partitions $\PP$ and $\QQ$ of $V$, the objective is to %\quitar{assign colors to some vertices so} 
find a partial proper coloring such that exactly one vertex in each part of $\PP$ is colored, and all colored vertices within each part of $\QQ$ receive the same color. 
%\quitar{, and no two adjacent vertices of $V$ receive the same color,} \quitar{while minimizing the total number of colors used}. 
The goal is to minimize the total number of colors used.

Several well-known generalizations of the classical Vertex Coloring Problem (VCP) %\quitar{graph coloring problems} 
are polynomial-time reducible to the DPCP, including the Partition Coloring Problem (PCP) \citep{li2000partition}, the List Coloring Problem (LCP) \citep{Lucci2024}, and the Conflict-free Coloring Problem on hypergraphs (CFCP) \citep{Even2003}. %\quitar{, which are well-known generalizations of the classical vertex coloring problem (VCP)}.
All these problems %are considered in their optimization versions, where the objective is to minimize the number of colors used, and are formally defined in the next section. Graph coloring problems 
arise in a wide range of applications related to resource allocation in areas such as scheduling, compilation, and telecommunications \citep{JensenToft2011}.
Exact algorithms for solving general instances have been developed, where \emph{branch-and-price} (B\&P) schemes represent the state of the art. See \citet{Trick1996,MalagutiEtAl2011} for VCP,  \citet{Furini2018} for PCP, and \citet{Lucci2024} for LCP.
In contrast, to the best of our knowledge, prior to \citet{LucciEtAl2025} the CFCP had neither been studied through integer linear programming (ILP) nor provided with exact algorithms for general instances.

In the CFCP, the objective is to color a subset of the vertices of a hypergraph, while ensuring that each hyperedge contains at least one vertex whose color is not assigned to any other vertex within that hyperedge.
This problem arises from an application in frequency assignment in telecommunications, where frequency channels from a finite spectrum must be assigned to antennas so that they can operate without interference. 
%\quitar{A classical motivation for the VCP arises from frequency assignment in telecommunications.} 
A common modeling approach represents antennas as vertices of a graph and uses edges to indicate that the corresponding antennas have intersecting coverage areas. 
If frequencies are represented by colors, the task becomes to color the vertices so that adjacent vertices receive different colors,
that is, a VCP instance.
However, this model may be overly restrictive, since an intersection of coverage areas does not necessarily imply interference, which may lead to an unnecessarily large number of frequencies.
For instance, suppose that the intersection of the coverage areas of two antennas is entirely contained within the coverage area of a third antenna. Clients located in that region may connect to the latter antenna. In this case, the first two antennas may share the same frequency, provided that it differs from the frequency assigned to the third antenna. More generally, to avoid interference it suffices that a user located at any point within the network coverage area can access at least one antenna transmitting on a unique frequency. This requirement naturally leads to the CFCP.

Figure~\ref{fig:ex-three-antennas} illustrates the example proposed in \cite{Smorodinsky13}. Antennas $a_1$, $a_2$, and $a_3$ have mutually intersecting coverage areas, as shown on the left, which under the standard VCP-based approach requires assigning a distinct frequency to each antenna. Observe, however, that the coverage area of the network can be partitioned into six regions, each characterized by the set of antennas that can be reached from that region:
$E_1 = \{a_1\},\ E_2 = \{a_2\},\ E_3 = \{a_3\},\ E_4 = \{a_1, a_2\},\ E_5 = \{a_2, a_3\},\ E_6 = \{a_1, a_2, a_3\}$.
Consider the hypergraph, depicted on the right, whose vertices correspond to the three antennas and whose hyperedges are given by the sets above. If one frequency is assigned to $a_1$ and $a_3$ and another to $a_2$, then each hyperedge contains a vertex whose frequency is unique within that hyperedge. For example, a client located in the region corresponding to hyperedge $E_6$ can communicate through antenna $a_2$, whose frequency differs from those assigned to $a_1$ and $a_3$.

\begin{figure}
\centering
\includegraphics[]{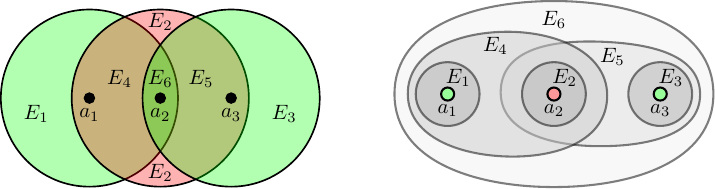}
\caption{An example of frequency assignment for three antennas with circular coverage areas, modeled as a CFCP. %(\cite{Smorodinsky13})
}
\label{fig:ex-three-antennas}
\end{figure}

The CFCP was originally studied on hypergraphs associated with various geometric settings \cite{HarPeledSmorodinsky03,ChenEtAl09,PachTardos10}. 
%\quitar{ These hypergraphs are usually defined in one of two possible ways: (i) The set of vertices of the hypergraph represents a family $\mathcal{R}$ of geometric regions in $\R^d$ (balls, boxes, half-spaces, or Jordan regions) and each hyperedge corresponds to a set of regions in $\mathcal{R}$ having nonempty intersection; or, (ii) given a family $\mathcal{R}$ of regions in $\R^d$ and a finite set of points $P \subset \R^d$, the points are the vertices of the hypergraph and each hyperedge is the subset of $P$ contained in one region from $\mathcal{R}$. 
Later, \cite{PachTardos09} studied the conflict-free coloring problem on general hypergraphs and on hypergraphs induced by (open or closed) neighborhoods of vertices of a graph. The authors refer to the latter problem as the \emph{conflict-free coloring of a graph}.
%For example, the authors show that the vertices of any graph with maximum degree $\Delta$ can be colored with $\log^{2 + \varepsilon} \Delta$ colors in such a way that the neighborhood of each vertex contains a uniquely colored vertex. 
More recent contributions have focused on conflict-free coloring of particular graph classes such as claw-free graphs, interval graphs (\cite{BhyravarapuEtAl22}), planar graphs (\cite{FabriciEtAl23}), graphs with bounded clique width, and intersection graphs (\cite{BhyravarapuEtAl24}). The extension of CFCP to list conflict-free coloring has also been addressed (\cite{Gupta2026}). 

% To the best of our knowledge, the CFCP has not been addressed thorough integer linear programming (ILP), and no exact algorithms are known for general instances. In contrast, branch-and-price (B\&P) schemes represent the state of the art for solving the VCP} \cite{Trick1996, MalagutiEtAl2011}, \quitar{the PCP} \cite{Furini2018}, and the LCP} \cite{Lucci2024}.

% The authors of \cite{LucciEtAl2025} present two integer linear programming formulations for the DPCP: a \textit{compact formulation} based on decision variables modeling color-to-vertex assignments, and a \textit{set covering formulation} with an exponential number of variables representing stable sets of vertices that receive the same color. In line with the trend mentioned above, the preliminary computational results reported in their work suggest that a branch-and-price (B\&P) approach applied to the set covering formulation is also promising for the DPCP, potentially providing the first practical exact method for solving CFCP instances. 

The contributions of this work are twofold.
First, we advance the structural study of the DPCP by deriving new results that enable the detection of infeasible instances and the reduction of instance size.
Moreover, we provide a formal proof of the polynomial-time reductions from CFCP and LCP to DPCP, which was previously stated without proof in \cite{LucciEtAl2025}. 
These results establish new connections between coloring problems in graphs and hypergraphs.
Second, we strengthen the B\&P algorithm previously proposed for the DPCP, contributing to the development of practical exact methods for solving the CFCP.
The improved algorithm integrates the new theoretical insights to preprocess instances, introduces specifically designed heuristics for the DPCP, and incorporates new methods for solving the pricing problem.
Moreover, general DPCP instances are computationally evaluated for the first time using this enhanced algorithm.
%\textcolor{magenta}{¿Decir algo de las instancias de CFCP?} \textcolor{blue} { 
In particular, this evaluation includes a diverse set of CFCP instances.
%}.
%\quitar{ The contributions of this work are twofold and build upon the results of} \cite{LucciEtAl2025}. \quitar{First, we advance the structural study of the DPCP by establishing decomposition results, characterizing feasible instances, and formally proving the reduction of several graph coloring problems to the DPCP. Second, we develop an enhanced B\&P algorithm for the DPCP that provides a practical exact method for solving CFCP instances. The algorithm incorporates the new theoretical insights, introduces heuristics for both the DPCP and the pricing problem, and is evaluated through an extensive computational study on synthetic DPCP and CFCP instances.}

The remainder of this paper is organized as follows. In the next section, required notation is introduced and the coloring problems mentioned above are formally defined. Their relationship with the DPCP is studied in Section~\ref{sec:definition:dpcp}. Section~\ref{sec:feasibility:DPCP} presents fundamental structural properties of the DPCP, including a characterization of feasible instances.
Heuristic approaches for solving the DPCP are presented in Section~\ref{sec:heuristics}. The set covering integer linear programming formulation proposed in \cite{LucciEtAl2025} is revisited in Section~\ref{sec:branch-and-price-algo}, where the main components of a B\&P algorithm are described. 
Section \ref{sec:B&Pconfigurations} describes the experimental setup and the preliminary experiments used to configure the B\&P algorithm.
Results of computational experiments conducted both on general DPCP instances and on instances stemming from the CFCP are discussed in Section~\ref{sec:computationalresults}. Finally, Section~\ref{sec:conclusions} presents concluding remarks and directions for future research.

\section{Notation and definitions}\label{sec.preliminaries}

For $n \in \N$, in the following we denote by $[n]$ the set  $\{z\in \N: z\leq n\}$. 

Given a graph $G=(V,E)$, a set $K \subseteq V$ of mutually adjacent vertices of $G$ is called a \emph{clique} in $G$, and a set $S \subseteq V$ of mutually non adjacent vertices is known as a \emph{stable set} or \emph{independent set} in $G$. The \emph{stability number} or \emph{independence number} $\alpha(G)$ is the largest cardinality of a stable set in $G$.
%, i.e., $\alpha(G)= \max\{ \card{S} : S \mbox{ stable set in } G\}$.
For a vertex $v \in V$, $N(v)$ denotes the set of its neighbors, $N[v]=N(v)\cup \{v\}$, and $G-v$ is the graph obtained by removing $v$. For a subset $U \subseteq V$, $G[U]$ denotes the subgraph of $G$ induced by $U$.
    
A \textit{hypergraph} $\hip$ is a pair $(\VV, \EE)$ where $\VV$ is a finite set of elements called vertices and $\EE$ is a family of non-empty subsets of $\VV$ called hyperedges. Graphs are the special class of hypergraphs where all hyperedges have cardinality two, in which case they are simply referred to as edges.

A \emph{coloring} of a hypergraph (or a graph) $(\VV, \EE)$ is an assignment $c: \VV' \to C$ of a subset $\VV' \subseteq \VV$ of its vertices to a set $C$ of colors. The vertices in $\VV'$ are the \emph{colored} vertices. The coloring is \emph{total} if  $\VV'=\VV$. 
%otherwise it is \emph{partial} \textcolor{red}{¿es otherwise o todo total es partial?}. 
Coloring problems on graphs and hypergraphs have been extensively studied in the literature (see, e.g.~\cite{JensenToft2011,MalagutiToth2010} for two prominent survey works). 
We introduce the coloring problems considered throughout this work.

The classical \emph{Vertex Coloring Problem} asks for the minimum number of colors required to define a total \emph{proper} coloring on a graph $G$, i.e., a total coloring such that no two adjacent vertices receive the same color. This minimum number is called the \emph{chromatic number} $\cn{G}$ of $G$. 
%Several integer linear programming models for the VCP have been proposed in the literature. The first, commonly referred to as the \emph{descriptive} model, includes binary decision variables for assigning colors to vertices as well as variables indicating whether a color is used. \cite{MendezDZabala2006,MendezDZabala2008} study this formulation and propose a Branch-and-Cut algorithm that incorporates additional inequalities to strengthen the linear relaxation and to handle symmetry issues. An alternative formulation is the \emph{set covering} formulation, which involves an exponential number of variables corresponding to stable sets in $G$ that represent feasible color classes. \cite{Trick1996}, and later,  \cite{MalagutiEtAl2011} propose Branch-and-Price approaches to solve this formulation. 
%\textcolor{blue}{Currently, this decomposition approach is widely recognized as the most efficient exact method for the VCP and its various generalizations. Its superiority has been demonstrated in recent studies for the partition coloring problem and the list coloring problem, as well as in our own computational experiments where the Branch-and-Price algorithm consistently outperformed compact integer programming formulations.}

In the \emph{List Coloring Problem}, a list $L_v \subseteq C$ of admissible colors is given for each $v \in V$. The task is to find a total proper coloring that satisfies $c(v) \in L_v$ for every $v \in V$ and minimizes the number of required colors \citep{Lucci2024}.
The feasible solutions of LCP are termed as \emph{list colorings}. Note that LCP is a generalization of VCP. 
Indeed, if $L_v = C$ holds for all $v \in V$, then list colorings are exactly the total proper colorings of $G$. 
%A Branch-and-Price algorithm for a set covering formulation of LCP was proposed by  \cite{Lucci2024}.
%Moreover, DPCP generalizes LCP, as it is shown at next. 

A coloring $c: \VV' \to C$ of a hypergraph $\hip=(\VV, \EE)$, with $\VV' \subseteq \VV$, is called \textit{conflict-free} if each hyperedge $E \in \EE$ contains a vertex of \emph{unique color}, i.e., a vertex $v \in E \cap \VV'$ satisfying $\card{\{ w \in E \, : \, c(w) = c(v) \}} = 1$. The \textit{Conflict-Free Coloring Problem}  is the task to determine a conflict-free coloring of $\hip$ minimizing the number of required colors \citep{Even2003}.
Its optimum value is known as the \textit{conflict-free chromatic number} of $\hip$ and is denoted by $\cfcn{\hip}$. 
The total conflict-free chromatic number $\cfcnT{\hip}$ is the minimum number of colors required to define a total conflict-free coloring of $\hip$. Observe that $\cfcn{\hip} \leq \cfcnT{\hip}$ holds trivially. Moreover, $\cfcnT{\hip} \leq \cfcn{\hip}+1$, as any %partial 
conflict-free coloring of $\hip$ can be extended to a total conflict-free coloring by assigning a new color to each uncolored vertex of $\hip$.
%\textcolor{red}{¿Notamos además que $\cfcn{\hip} \leq \cfcnT{\hip}$?}
%\textcolor{blue}{Lo he agregado, aunque creo que es trivial.}
Total CFCP is a generalization of VCP, as it is straightforward to verify that 
both problems coincide on graphs. Indeed,
any total proper coloring of a graph $G$ is a total conflict-free coloring, as each edge of $G$ contains at least one (actually two) uniquely colored vertices. Conversely, if $uv \in E$, then any total conflict-free coloring of $G$ assigns two different (unique) colors to the vertices $u$ and $v$. Hence, $\cfcnT{G} = \cn{G}$ holds for any graph $G$.

Given a partition $\PP = \{P_i: i \in  [k]\}$ of the set of vertices of a graph $G=(V, E)$, the \emph{Partition Coloring Problem} asks for a proper 
%(partial or total) 
coloring of $G$ such that exactly one vertex of each partition class is colored, while the number of required colors is minimized \citep{li2000partition}. %The optimal objective value $\chi_P(G, \PP)$ of this problem is the \emph{partition chromatic number} of $G$ with respect to the partition $\PP$. 
Observe that VCP is the special case of PCP obtained when $\PP = \{\{v\} : v \in V\}$. %At the same time, DPCP generalizes PCP, as the latter is obtained when $\PP_1 = \PP, \PP_2=\{\{v\}: v\in V\}$.
% \cite{Furini2018} consider a set covering formulation for PCP with an exponential number of variables and propose a Branch-and-Price solution algorithm.

\section{The Double-Partition Coloring Problem}
\label{sec:definition:dpcp}

Given a graph $G=(V,E)$, and two partitions  $\PP$ and $\QQ$ of $V$, a \emph{double-partition coloring} of $G$ is a proper coloring of $G$ that satisfies the following two constraints:
\begin{enumerate}[label=(C\arabic*)]
    \item For each $P \in \PP$, exactly one vertex of $P$ is colored. \label{def:DPCP:C1}
    \item For each $Q \in \QQ$, all colored vertices in $Q$ are assigned the same color. \label{def:DPCP:C2}
\end{enumerate}
The \emph{Double-Partition Coloring Problem} (DPCP) is the optimization problem of finding the minimum number of colors required to define a double-partition coloring on $G$, referred to as its \emph{double-partition chromatic number} $\dpcn{G, \PP, \QQ}$. Formally, \\

\noindent
\fbox{%
\begin{minipage}{0.98\linewidth}
\textbf{Double-Partition Coloring Problem (DPCP)}\\
\textsc{instance}. A graph $G=(V,E)$ and two partitions 
$\PP$ and $\QQ$ of $V$.\\
\textsc{objective}. To obtain $\dpcn{G, \PP, \QQ}$.
\end{minipage}%
}

\bigskip

In the remainder of this paper  $(G, \PP, \QQ)$ denotes an instance of DPCP with $\PP =\{P_i: i \in [n]\}$ and $\QQ =\{Q_j: j \in [m]\}$. 
Given a vertex $v$, we denote by $i(v) \in [n]$ (resp. $j(v) \in [m]$) the unique index such that $v \in P_{i(v)}$ (resp. $v \in Q_{j(v)}$).

\begin{remark}\label{clique-estable}
%Let $(G, \PP, \QQ)$ be an instance of DPCP. 
For each $P \in \PP$, adjacencies among vertices in $P$ impose no restriction on the coloring, since exactly one vertex in P is assigned a color. 
This observation generalizes Proposition~2 in \cite{li2000partition} for the PCP. 
Therefore, throughout this paper, each set P is assumed to be either a clique or an independent set, whichever is more convenient for the arguments.
\end{remark}

No reductions are known between LCP, PCP, and CFCP. However, we will show that each of these problems can be reduced to specific instances of DPCP.

% Observe that VCP is the special case of DPCP obtained when $\PP= 
% \QQ=\{\{v\}: v\in V\}$. {\color{red} Yo aca no ondria por que generaliza al VSP porque ya esta en otros papers como cada uno de los tres que vamos a mencionar lo generalizan}
Clearly, PCP is obtained from DPCP when $\QQ=\{\{v\}: v\in V\}$.
%In fact, it also holds for every partition $\QQ$ verifying that, for all $Q\in \QQ$ there exists $P\in \PP$ such that $Q\subseteq P$.
In fact, the same holds for any partition $\QQ$ satisfying that each part $Q \in \QQ$ is contained in some part $P \in \PP$.

\color{black}
% In the following  we show that DPCP also generalizes the other two coloring problems defined in the previous section, a fact that was {\color{red} already? }mentioned in \cite{LucciEtAl2025}.

Moreover,  consider an instance of LCP defined by a graph $G=(V,E)$, a color set $C$, and, for each $v \in V$, a set $L_v \subseteq C$ of admissible colors. Define $\tilde G = (\tilde V, \tilde E)$ as the graph with vertex set $\tilde V = \{(v,k) : v \in V,\ k \in L_v\}$, and edge set $\tilde E$ such that $(v_1,k_1)(v_2,k_2) \in \tilde E$ if and only if  $v_1v_2 \in E$ or $k_1 \neq k_2$. Let      
    \begin{align*}
    \PP &= \{P_v : v \in V\},  \text{ with } P_v=\{(v',k) \in \tilde{V}: v'=v\}, \mbox{ and } \\ 
    \QQ &= \{Q_k  : k \in C\}, \text{ with } Q_k = \{(v,k') \in \tilde{V}: k'=k \} 
    \end{align*}
    be two partitions of $\tilde{V}$. 
Figure~\ref{fig:ej-lcp2dpcp} illustrates this construction. 
Then, the following result can be proved:

\begin{figure}
\begin{subfigure}{0.3\textwidth}
\centering
\includegraphics{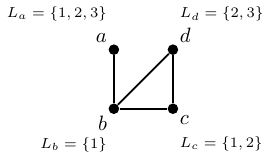}
\end{subfigure}
\begin{subfigure}{0.69\textwidth}
\centering
\includegraphics{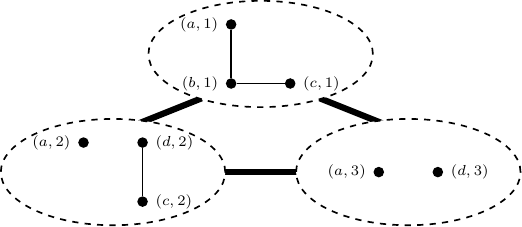}
\end{subfigure}
\caption{Example of the construction of $\tilde G$ (right) from graph $G$ (left). Each subgraph on the right contains the vertices of a part $Q_k$, $k\in C$. Each thick segment represents the complete join between the corresponding vertex sets.}
\label{fig:ej-lcp2dpcp}
\end{figure}

\begin{proposition}
Each list coloring of $G$ induces a double-partition coloring of $(\tilde G, \PP, \QQ)$ with the same number of colors, and conversely. 
\end{proposition}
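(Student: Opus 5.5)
We prove the two directions separately, each by an explicit translation between colorings. The idea is that in $\tilde G$ the "colors" of the list coloring are encoded by the parts $Q_k$, not by the colors of the double-partition coloring. The edges between different $Q_k$'s force all colored vertices with the same list-color index $k$ to share a common DPCP color, and different $k$'s to get different DPCP colors. So DPCP colors and list colors will be in bijection on the used colors.

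\emph{From list colorings to double-partition colorings.} Let $c: V \to C$ be a list coloring of $G$ and let $C' = c(V)$ be the set of colors it uses. Color exactly the vertices $(v, c(v)) \in \tilde V$; these exist because $c(v) \in L_v$. Give $(v,c(v))$ the color $c(v)$ itself, i.e.\ define $\tilde c(v,c(v)) = c(v)$.
\begin{enumerate}[label=(\roman*)]
\item Condition \ref{def:DPCP:C1} holds, since $P_v$ contains exactly the colored vertex $(v,c(v))$.
\item Condition \ref{def:DPCP:C2} holds, since every colored vertex of $Q_k$ has second coordinate $k$ and therefore receives color $k$.
\item The coloring is proper. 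If $(v_1,k)$ and $(v_2,k)$ are colored and adjacent, then $k_1 = k_2 = k$, so by the definition of $\tilde E$ we must have $v_1v_2 \in E$. But then $c(v_1)=c(v_2)=k$ contradicts properness of $c$. Vertices with different second coordinates receive different colors anyway.
\end{enumerate}
The number of colors is $|C'|$.

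\emph{From double-partition colorings to list colorings.} Let $\tilde c$ be a double-partition coloring of $(\tilde G,\PP,\QQ)$.
\begin{enumerate}[label=(\roman*)]
\item By \ref{def:DPCP:C1}, for each $v$ there is a unique colored vertex $(v,k_v) \in P_v$. Define $c(v) = k_v$; then $c(v) \in L_v$ by the definition of $\tilde V$.
\item $c$ is proper. Suppose $v_1v_2 \in E$ with $k_{v_1}=k_{v_2}=k$. Then $(v_1,k)$ and $(v_2,k)$ are adjacent in $\tilde G$ (because $v_1v_2\in E$). Yet both lie in $Q_k$, so by \ref{def:DPCP:C2} they get the same color, contradicting properness of $\tilde c$.
\item The color counts agree. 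If $k \ne k'$ are both used by $c$, the colored vertices in $Q_k$ and $Q_{k'}$ are adjacent in $\tilde G$ (different second coordinates), so they receive distinct $\tilde c$-colors. Together with \ref{def:DPCP:C2}, this gives an injective map from used list colors to used DPCP colors. Every colored vertex lies in some $Q_k$ with $k$ used by $c$, so this map is also surjective. Hence $c$ uses exactly as many colors as $\tilde c$.
\end{enumerate}

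The main point needing care is this last counting step. The statement "with the same number of colors" must be read as the number of distinct colors used, not $|C|$, and the bijection between used $\tilde c$-colors and used list colors relies on the complete joins between different $Q_k$'s. (One may additionally rename the DPCP colors by $k$ to make the correspondence literal.) The remaining verifications are direct checks against the definition of $\tilde E$.
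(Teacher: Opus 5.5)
Your proposal is correct and follows essentially the same route as the paper. Both directions use the same constructions: $\tilde c(v,c(v))=c(v)$ one way, and $c(v)=k_v$ obtained from (C1) the other way. Your injective/surjective count is just an unpacked form of the paper's observation that two colored vertices get the same $\tilde c$-color exactly when $k_{v_1}=k_{v_2}$. Your direct use of (C2) to prove properness replaces the paper's appeal to that same observation.
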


\begin{proof} 
    Let $c: V \longrightarrow C$ be a list coloring of $G$ and  $\tilde V' = \{(v,c(v)) : v \in V\} \subseteq \tilde{V}$. 
    Consider the color assignment $\tilde c: \tilde V' \to C$ defined by $\tilde c(v,c(v)) = c(v)$.
    Clearly, $\tilde c$ verifies constraints \ref{def:DPCP:C1} and \ref{def:DPCP:C2}, and uses as many colors as $c$.
    To prove that $\tilde c$ is a double-partition coloring of $\tilde G$, it remains to show that $\tilde c$ is a 
    %partial 
    proper coloring of $\tilde G$. Indeed, let $(v_1,k_1)$ and $(v_2,k_2)$ be two adjacent colored vertices of $\tilde G$. By the definition of $\tilde E$, either $v_1v_2 \in E$ or $k_1 \neq k_2$. Moreover, observe that $\tilde c(v_1,k_1) = k_1 = c(v_1)$ and $\tilde c(v_2,k_2) = k_2 = c(v_2)$. Thus, if $v_1v_2 \in E$, then $c(v_1) \neq c(v_2)$ as $c$ is a proper coloring of $G$. Otherwise, $k_1 \neq k_2$ must hold.
    In both cases, it follows that $\tilde c (v_1,k_1) \neq \tilde c(v_2,k_2)$.

    For the converse, let $\tilde c$ be a double-partition coloring for $\tilde G$.
    Due to constraint \ref{def:DPCP:C1}, for each $v\in V$ there is a unique $k_v \in L_v$ such that the vertex $(v, k_v) \in P_v$ is colored. 
    Observe that two vertices $(v_1, k_{v_1})$ and $(v_2, k_{v_2})$ have the same color if and only if $k_{v_1} = k_{v_2}$. Indeed, if $k_{v_1} = k_{v_2}$ then $(v_1, k_{v_1}), (v_2, k_{v_2}) \in Q_{k_{v_1}} = Q_{k_{v_2}}$ and constraint \ref{def:DPCP:C2} requires both vertices to have the same color. Otherwise, if $k_{v_1} \neq k_{v_2}$, the vertices are adjacent and must have different colors, as $\tilde c$ is proper. As a consequence, the number of colors required by $\tilde c$ is equal to $|\{k_v : v \in V\}|$.
    
    Now consider the color assignment $ c: V \longrightarrow C$ defined by $c(v)=k_v \in L_v$, which clearly requires the same number of colors as $\tilde c$.
    To prove that $c$ is a proper coloring of $G$, let $v_1,v_2$ be two adjacent vertices from $V$. 
    By the definition of $\tilde E$, $(v_1,k_{v_1})$ and $(v_2,k_{v_2})$ are adjacent in $\tilde G$. 
    Hence, $\tilde c(v_1,k_{v_1})\neq \tilde c(v_2,k_{v_2})$, as $\tilde c$ is a proper coloring of $\tilde G$. 
    Moreover, $k_{v_1} \neq k_{v_2}$ follows from the observation above and thus $c(v_1) \neq c(v_2)$.
\end{proof}

%Using similar ideas, one can show that the Double-Partitioning Coloring Problem also generalizes the Conflict-Free Coloring Problem. 
To clarify the relationship between DPCP and CFCP, we first introduce the notion of the \emph{conflict graph of a hypergraph}.

\begin{definition}
\label{def:3.1}
Given a hypergraph $\hip = (\VV, \EE)$ its \textit{conflict graph $\CG{\hip}$} is the graph whose vertex set is 
$$
V(\CG{\hip})=\left\{(E,v): E \in \mathcal \EE, v \in E\right\},
$$
and whose edge set is defined as follows:
$$
(E_1,v_1) (E_2,v_2) \in E(\CG{\hip}) \Leftrightarrow v_1 \neq v_2 \mbox{ and } %E_1\cap E_2\cap \{v_1,v_2\}\neq \emptyset.
(v_1 \in E_2 \mbox{ or } v_2 \in E_1)
$$
%The neighborhood $N((E,v))$ of a vertex $(E,v) \in V(\CG{\hip})$ is therefore given by 
%$$N((E,v))=\{(\tilde E, w): w\neq v  \text{ and }  \{v,w\} \cap E\cap \tilde E\neq \emptyset\}. $$ 
\end{definition}

The idea underlying the definition above is that assigning a color $k$ to a vertex $(E,v)$ of $\CG{\hip}$ encodes two decisions in the hypergraph $\hip$: assigning color $k$ to vertex $v$ in hyperedge $E$, and forbidding any other vertex of $E$ from receiving the same color $k$.
Consequently, two vertices $(E_1,v_1)$ and $(E_2,v_2)$ of $\CG{\hip}$ with $v_1 \neq v_2$ cannot be assigned the same color whenever $v_2 \in E_1$ or $v_1 \in E_2$. Therefore, such vertices must be adjacent in $\CG{\hip}$.
Moreover, consider the two partitions of $V(\CG{\hip})$ defined by:
\begin{align*}
\PP &= \{ P_E: E \in \EE \}, \text{ with } P_E = \{(E,v') \in V(G(\hip)) : v' \in E\}, \mbox{ and}\\
\QQ &= \{ Q_v: v \in V \}, \text{ with } Q_v = \{(E',v) \in V(G(\hip)) : v \in E'\}.
\end{align*}
By Definition \ref{def:3.1}, $P_E$ is a clique for each $E \in \mathcal{E}$, since for any two distinct vertices $(E,v_1),(E,v_2) \in P_E$, we have $v_1\in E$ and $v_2\in E$.
An example of this construction is illustrated in Figure~\ref{fig.hypergraph}, adapted from an example presented in \cite{LucciEtAl2025}.
Then, the following result can be proved.

\begin{figure}
\centering
\begin{subfigure}{0.22\textwidth}
\centering
\includegraphics{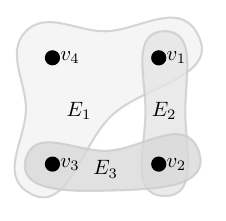}
\caption{Hypergraph $\hip$.}
\end{subfigure}
\hfill
\begin{subfigure}{0.33\textwidth}
\centering
\includegraphics{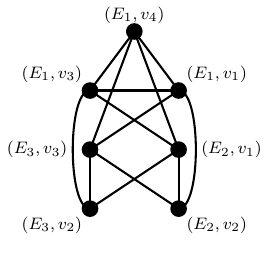}
\caption{Conflict graph $G(\hip)$.}
\end{subfigure}
\hfill
\begin{subfigure}{0.33\textwidth}
\centering
\includegraphics{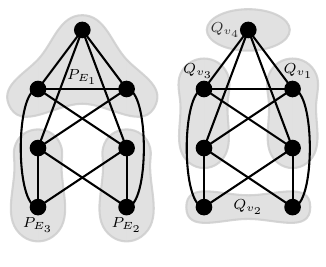}
\caption{Partitions $\PP$ and $\QQ$ of $V(G(\hip))$.}
\end{subfigure}
\caption{
Example of the construction of the conflict graph $G(\hip)$ for the hypergraph $\hip$.}
\label{fig.hypergraph}
\end{figure}

\begin{proposition}\label{prop.cfcp.as.dpcp}
Each conflict-free coloring of $\hip$ induces a double-partition coloring of $(\CG{\hip}, \PP, \QQ)$ with the same number of colors, and conversely. Therefore, $$
\cfcn{\hip}=\dpcn{\CG{\hip}, \PP, \QQ}.
$$

\end{proposition}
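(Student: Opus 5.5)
We prove the two directions separately, following the scheme used for the LCP reduction, and then read off the equality of optimal values.

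\emph{From conflict-free colorings to double-partition colorings.} Let $c:\VV'\to C$ be a conflict-free coloring of $\hip$. For each hyperedge $E\in\EE$ we fix one vertex $v_E\in E\cap\VV'$ whose color is unique in $E$; such a vertex exists by the conflict-free condition. We then color exactly the vertices $(E,v_E)$ of $\CG{\hip}$, setting $\tilde c(E,v_E)=c(v_E)$.

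Constraint \ref{def:DPCP:C1} holds by construction, since exactly one vertex of each $P_E$ is colored. Constraint \ref{def:DPCP:C2} holds because every colored vertex of $Q_v$ has the form $(E,v)$ and therefore receives the color $c(v)$. The coloring $\tilde c$ uses only colors in the image of $c$, so it uses at most as many colors as $c$.

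It remains to check that $\tilde c$ is proper. Take adjacent colored vertices $(E_1,v_1)$ and $(E_2,v_2)$. By adjacency $v_1\neq v_2$, and without loss of generality $v_2\in E_1$. Since $v_1=v_{E_1}$ has a unique color in $E_1$, and $v_2$ is a different colored vertex of $E_1$, we get $c(v_2)\neq c(v_1)$.

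\emph{From double-partition colorings to conflict-free colorings.} Let $\tilde c$ be a double-partition coloring of $(\CG{\hip},\PP,\QQ)$. By \ref{def:DPCP:C1}, each $E$ has a unique colored vertex $(E,v_E)$. Let $\VV'=\{v_E:E\in\EE\}$ and define $c(v)$ as the common color of the colored vertices of $Q_v$; this is well defined by \ref{def:DPCP:C2}. The number of colors used equals that of $\tilde c$.

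To show that $c$ is conflict-free, fix $E$ and suppose some $w\in E\cap\VV'$ with $w\neq v_E$ satisfies $c(w)=c(v_E)$. Since $w\in\VV'$, we have $w=v_{E'}$ for some $E'$, so $(E',w)$ is colored. The vertices $(E,v_E)$ and $(E',w)$ satisfy $v_E\neq w$ and $w\in E$, so they are adjacent in $\CG{\hip}$. They have colors $c(v_E)$ and $c(w)$, which are equal, contradicting properness of $\tilde c$. Hence $v_E$ has a unique color in $E$.

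\emph{The equality.} The number of colors is preserved in the second direction and does not increase in the first. Taking optimal solutions in each direction therefore gives $\cfcn{\hip}=\dpcn{\CG{\hip},\PP,\QQ}$.

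\emph{Main obstacle.} The only subtle point is the count of colors in the first direction. Vertices of $\VV'$ that are never chosen as some $v_E$ leave the coloring, so $\tilde c$ may use strictly fewer colors than $c$. To obtain the statement with the same number of colors, we observe that restricting $c$ to $\{v_E:E\in\EE\}$ is still conflict-free, because each chosen $v_E$ remains uniquely colored in $E$ after removing other colored vertices. We may therefore assume $\VV'=\{v_E\}$ from the start, and then $\tilde c$ uses exactly the colors of $c$. In any case, the inequality alone already suffices for the equality of optimal values.
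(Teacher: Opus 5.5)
Your proof is correct and follows essentially the same route as the paper: the same witness-based construction $\tilde c(E,v_E)=c(v_E)$ with the same properness argument, and the same converse via the common color of the colored vertices in each $Q_v$ and the adjacency of $(E,v_E)$ and $(E',w)$. Your closing remark about restricting $c$ to the witnesses $\{v_E : E\in\EE\}$ is a small refinement the paper omits, since the paper only claims that $\tilde c$ uses no more colors than $c$. As you note, that inequality already suffices for the equality of optimal values.
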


\begin{proof}
Let $c:\VV' \to C$, with $\VV' \subseteq \VV$, be a conflict-free coloring of $\hip$. For each $E \in \EE$, let $v_E \in E$ be a uniquely colored vertex in $E$ with respect to $c$. Define 
$
V' = \{(E,v_E) : E \in \EE\} \subseteq V(\CG{\hip}),
$
and let $\tilde c:V' \to C$ be the coloring of $\CG{\hip}$ defined by $\tilde c(E,v_E) = c(v_E)$.
Clearly, $\tilde c$ uses no more colors than $c$. 
Moreover, by construction, $\tilde c$ satisfies conditions~\ref{def:DPCP:C1} and~\ref{def:DPCP:C2}.

We now show that $\tilde c$ is a proper coloring of $\CG{\hip}$. 
Let $(E_1,v_{E_1})$ and $(E_2,v_{E_2})$ be adjacent vertices in $V'$. 
By Definition~\ref{def:3.1}, we have $v_{E_1} \neq v_{E_2}$ and either $v_{E_2} \in E_1$ or $v_{E_1} \in E_2$. 
Assume, without loss of generality, that $v_{E_2} \in E_1$. 
Since $v_{E_1}$ is the unique vertex of $E_1$ colored with $c(v_{E_1})$, it follows that $c(v_{E_1}) \neq c(v_{E_2})$, and therefore $\tilde c(E_1,v_{E_1}) \neq \tilde c(E_2,v_{E_2})$. Thus, $\tilde c$ is proper.

For the converse direction, let $\tilde c:\tilde{V}' \to C$, with $\tilde{V}'  \subseteq V(\CG{\hip})$, 
be a double-partition coloring of $(\CG{\hip},\PP,\QQ)$. 
For each $v \in \VV$, define
$Q'_v = Q_v \cap \tilde{V}' $,
and let $\mathcal{W} = \{ v \in \VV : Q'_v \neq \emptyset \}$.
By condition~\ref{def:DPCP:C2}, all vertices in $Q'_v$ receive the same color. 
Hence the mapping $c:\mathcal{W} \to C$ defined by
$c(v) = \tilde c(E,v), \quad \text{for any } (E,v) \in Q'_v$,
is a well-defined coloring of $\hip$ and uses the same number of colors as $\tilde c$.

Finally, we prove that $c$ is conflict-free. In fact, condition~\ref{def:DPCP:C1}states that, for any $E \in \EE$, exactly one vertex $(E,v_E) \in P_E$ is colored. 
Thus, $v_E$ is colored in $c$. 
Suppose there exists another colored vertex $v' \in E$, $v' \neq v_E$. Then $(E',v') \in \tilde{V}'$ holds for some $E' \in \EE$,  $E' \neq E$. Moreover,  observe that
$(E,v_E)$ and $(E',v')$ are adjacent in $\CG{\hip}$.
Since $\tilde c$ is proper, $\tilde c(E,v_E) \neq \tilde c(E',v')$, and therefore $c(v_E) \neq c(v')$.
Hence, $v_E$ has a color distinct from all other colored vertices in $E$, 
and $c$ is a conflict-free coloring of $\hip$.
\end{proof}

As a consequence of Proposition~\ref{prop.cfcp.as.dpcp}, DPCP generalizes CFCP. Moreover, this generalization is strict, since all CFCP instances are feasible, while there are infeasible instances of DPCP, as we shall see in the next section.

\section{First results on particular  instances of DPCP} \label{sec:feasibility:DPCP}

 In this section, we analyze DPCP instances with special structures that yield preprocessing rules, which either reduce the problem to smaller equivalent instances or detect infeasibility.

Recall that $(G,\PP,\QQ)$ denotes an instance of the DPCP, with $G=(V,E)$, $\PP = \{P_i : i \in [n]\}$, and $\QQ = \{Q_j : j \in [m]\}$.
Due to Remark~\ref{clique-estable}, we assume that each set in $\PP$ induces a clique in $G$. 

Let us first introduce the following remark for instances where $n=1$ or $m=1$.

\begin{remark} \label{nm_igual1}

Instances $(G,\PP,\QQ)$ with $\PP = \{V\}$ are always feasible and $\dpcn{G, \PP, \QQ} = 1$. Indeed, assigning an arbitrary color to any vertex $v \in V$ yields an optimal double-partition coloring. For instances $(G,\PP,\QQ)$ with $\QQ = \{V\}$, since all vertices must share the same color and each $P \in \PP$ is a clique, $(G,\PP,\QQ)$ is feasible if and only if $\alpha(G) = n$. 
In this case, all colored vertices belong to the same part $Q=V$ and, by \ref{def:DPCP:C2}, must therefore receive the same color. Hence, exactly one color is used, and $\dpcn{G, \PP, \QQ} = 1$.
\end{remark}

From the previous remark, in the following we consider DPCP instances where $n$ and $m$ at least two.

%\medskip 

Observe that, if
%is a singleton, say 
$P = \{v\}$ for some part $P \in \PP$, then $v$ must always be colored and this leads to the following possible instance reductions:

\begin{lemma}
\label{th:singletons}
Given $(G,\PP,\QQ)$, let $P\in\PP$ such that $P=\{u\}$ and $v \in  Q_{j(u)}\setminus \{u\}$.

\begin{itemize}
\item If $v\in N(u)$ and $P_{i(v)}=\{v\}$, then $(G,\PP,\QQ)$ is infeasible. 

\item If $v\in N(u)$ and  $|P_{i(v)}|\geq 2$,  then solving $(G,\PP,\QQ)$ is equivalent to solving the instance $(G-v,\tilde \PP, \tilde \QQ)$, where 
$
\tilde \PP= \{P\setminus \{v\}: P\in \PP\}$ and $\tilde \QQ= \{Q\setminus \{v\}: Q\in \QQ\}$.

\item If $v\notin N(u)$ and $P_{i(v)}=\{v\}$ then solving $(G,\PP,\QQ)$ is equivalent to solving $(\tilde  G,\tilde\PP,\tilde \QQ)$    where $\tilde G$ is the graph obtained by contracting $u$ and  $v$, i.e. $\tilde G=(V\setminus \{v\}, \tilde  E)$, with $\tilde E=(E\setminus \{vw: w\in N(v)\}) \cup \{uw: w\in N(v)\})$,  
%{\color{red} ESTO ES LO QUE ESTA DESPUES PARA LEMA 4.1.
 %given a subset of vertices $W$, we define the graph $G[W]/\QQ$ with a vertex $w_j$ for each $j \in [m]$ such that $W \cap Q_j \neq \emptyset$. Two vertices $w_{j_1}$ and $w_{j_2}$ are adjacent in $G[W]/\QQ$ if and only if there exist $v_1 \in W \cap Q_{j_1}$ and $v_2 \in W \cap Q_{j_2}$ such that $v_1v_2 \in E$.
%ME PARECE DEJARLO COMO ESTA
%}
 $\tilde \PP=\PP\setminus \{\{v\}\}$ and $\tilde \QQ= \{Q\setminus \{v\}: Q\in \QQ\}$.
        
\end{itemize}
\end{lemma}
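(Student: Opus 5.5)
The three items rest on one observation. Since $P=\{u\}$, constraint \ref{def:DPCP:C1} forces $u$ to be colored in every double-partition coloring. Then \ref{def:DPCP:C2} says that any colored vertex of $Q_{j(u)}$ must receive the color $c(u)$. The plan is to treat each item by asking whether $v$ can be colored, and, if it can, what coloring it forces. For the equivalences, we will give explicit maps between double-partition colorings of the two instances that preserve the set of colors used. Hence the optimal values coincide, and feasibility is equivalent.

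\textbf{First item.} Since $P_{i(v)}=\{v\}$, the vertex $v$ is also forced to be colored. Since $v\in Q_{j(u)}$, \ref{def:DPCP:C2} gives $c(v)=c(u)$. But $uv\in E$, so properness requires $c(u)\neq c(v)$. This is a contradiction, so no double-partition coloring exists.

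\textbf{Second item.} First we show that $v$ is never colored. If it were, the argument of the first item gives $c(v)=c(u)$ together with $uv\in E$, which is a contradiction. Now take a double-partition coloring $c$ of $(G,\PP,\QQ)$.
\begin{itemize}
\item Its colored vertices all lie in $V\setminus\{v\}$.
\item Restricting $c$ to $G-v$ keeps it proper and keeps \ref{def:DPCP:C2}.
\item It keeps \ref{def:DPCP:C1} for $\tilde\PP$, because the colored vertex of $P_{i(v)}$ is not $v$.
\end{itemize}
Conversely, a coloring of $(G-v,\tilde\PP,\tilde\QQ)$ is a coloring of $G$ that leaves $v$ uncolored, and it satisfies \ref{def:DPCP:C1} and \ref{def:DPCP:C2} for $\PP,\QQ$. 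Here the hypothesis $|P_{i(v)}|\ge 2$ matters: it guarantees that $\tilde\PP$ consists of nonempty sets, so $\tilde\PP$ is a genuine partition of $V\setminus\{v\}$ and the parts correspond bijectively.

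\textbf{Third item.} Both $u$ and $v$ are forced to be colored, and both receive the same color.

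Given a coloring $c$ of the original instance, define $\tilde c$ as the restriction of $c$ to $V\setminus\{v\}$. We check the three requirements.
\begin{itemize}
\item \emph{Properness.} The only new edges are $uw$ with $w\in N(v)$. If such a $w$ is colored, then $c(w)\neq c(v)=c(u)$.
\item \emph{\ref{def:DPCP:C1}.} The part $\{v\}$ has been removed from $\tilde\PP$, and every other part of $\PP$ is unchanged.
\item \emph{\ref{def:DPCP:C2}.} It is inherited from $c$.
\end{itemize}
The set of colors used is the same, because $c(v)=c(u)$ is still used by $u$.

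Conversely, given $\tilde c$ on $\tilde G$, extend it by setting $c(v)=\tilde c(u)$.
\begin{itemize}
\item \emph{\ref{def:DPCP:C1}.} It holds because $v$ is the colored vertex of $\{v\}$.
\item \emph{\ref{def:DPCP:C2}.} On $Q_{j(u)}$ it holds because $u$ is colored, so every colored vertex of $Q_{j(u)}\setminus\{v\}$ already has color $\tilde c(u)$.
\item \emph{Properness at $v$.} Every colored $w\in N(v)$ is adjacent to $u$ in $\tilde G$, so $\tilde c(w)\neq\tilde c(u)=c(v)$. Note that $u\notin N(v)$ by hypothesis, so no edge $uu$ arises.
\end{itemize}

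The main point needing care is this converse direction of the third item, which requires checking properness at $v$ and \ref{def:DPCP:C2} on $Q_{j(u)}$ as above. The rest is bookkeeping: $\tilde\QQ$ drops $v$ only from $Q_{j(u)}$, which still contains $u$ and so stays nonempty.
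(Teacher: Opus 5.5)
Your proof is correct and follows the same route as the paper: $u$ must be colored by (C1), every colored vertex of $Q_{j(u)}$ must then share its color by (C2), and each item follows from whether $v$ may or must be colored. The only difference is that you write out the color-preserving correspondences between solutions of the original and reduced instances, including properness across the contracted edges and nonemptiness of the new parts, whereas the paper leaves these implicit in ``$v$ can be removed'' and ``$u$ and $v$ can be identified.''
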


\begin{proof}
Since $P=\{u\}$, node $u$ must be colored in order to satisfy condition \ref{def:DPCP:C1}. Moreover, since $v\in Q_{j(u)}$, if $v$ is colored, then it must be assigned to the same color as $u$. 

Therefore, if $v\in N(u)$, nodes $u$ and $v$ cannot both be colored, and hence $v$ must remain uncolored.
    
If, in addition, $P_{i(v)}=\{v\}$, then $v$ must be colored to satisfy condition \ref{def:DPCP:C1} for $P_{i(v)}$, which contradicts the fact that $v$ must remain uncolored. Thus, the instance is infeasible.

If $|P_{i(v)}|\geq 2$, the fact that $v$ must remain uncolored implies that $v$ can be removed from the graph and from all sets in $\PP$ and $\QQ$. The resulting instance is equivalent to $(G,\PP,\QQ)$, proving the second statement.

Finally, suppose that $v\notin N(u)$ and $P_{i(v)}=\{v\}$. In this case, $v$ must be colored, and, since $v\in Q_{j(u)}$, both $u$ and $v$ must be assigned to the same color. Hence, $u$ and $v$ can be identified, yielding an equivalent instance obtained by contracting $u$ and $v$ as described in the third statement.
\end{proof}

Observe that, by iteratively applying this last result, we arrive at either an infeasible instance or an instance where, for every vertex $v$ such that $|P_{i(v)}| = 1$, $Q_{j(v)}$ is a stable set.

\medskip 

Many graph coloring problems can be decomposed according to the connected components of the underlying graph or hypergraph. 
For example, instances of the VCP, PCP, and CFCP can be solved independently on each connected component, and the resulting subsolutions can be merged into a complete solution. 
However, DPCP instances do not follow this pattern when some part in 
$\QQ$ intersects more than one connected component. 
Observe that this cannot occur for any part $P \in \PP$, since $P$ induces a clique and is therefore contained in a single connected component.
%This situation is illustrated in the following example.

\begin{example}
Figure~\ref{fig:nocon}$(a)$ shows an optimal double-partition coloring with 3 colors for an instance $(G,\PP,\QQ)$, where the parts of $\PP$ are represented by dashed lines and those of $\QQ$ are shaded. Note that $G$ has two connected components, $G_1$ and $G_2$, and each part $P\in\PP$ is fully contained in either $V(G_1)$ or $V(G_2)$. This, however, does not hold for all parts of $\QQ$.
Figure~\ref{fig:nocon}$(b)$ and $(c)$ show optimal double-partition colorings with 2 colors for the subinstances $(G_1,\PP^1,\QQ^1)$ and $(G_2,\PP^2,\QQ^2)$ induced by the connected components $G_1$ and $G_2$, respectively. Merging these colorings does not yield a feasible double-partition coloring for the original instance, since $v_2$ and $v_6$ receive different colors even though they belong to the same part of $\QQ$.

\begin{figure}
\begin{subfigure}{0.395\textwidth}
\centering
\includegraphics[width=\textwidth]{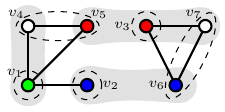}
\caption{$(G,\PP,\QQ)$}
\end{subfigure}
\hfill
\begin{subfigure}{0.22\textwidth}
\centering
\includegraphics[width=\textwidth]{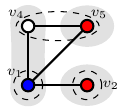}
\caption{$(G_1,\PP^1,\QQ^1)$}
\end{subfigure}
\hfill
\begin{subfigure}{0.22\textwidth}
\centering
\includegraphics[width=\textwidth]{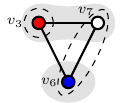}
\caption{$(G_2,\PP^2,\QQ^2)$}
\end{subfigure}
\caption{(a) An optimal double-partition coloring of $(G,\PP,\QQ)$; (b) and (c) optimal double-partition colorings of the subinstances $(G_1,\PP^1,\QQ^1)$ and $(G_2,\PP^2,\QQ^2)$, respectively.}
\label{fig:nocon}
\end{figure}

\end{example}

However, decomposition in connected components can be exploited to obtain lower bounds on the double-partition chromatic number of a graph. Indeed, the restriction of a double-partition coloring of $(G,\PP,\QQ)$ to the vertex set $V_k$ of a connected component $G_k$, yields a double-partition coloring of  $(G_k,\PP^k,\QQ^k)$, 
where  $\PP^k=\{P\cap V_k : P\in \PP, \,  P\cap V_k \neq \emptyset\}$ and $\QQ^k=\{Q\cap V_k : Q\in \QQ,\,  Q\cap V_k \neq \emptyset\}$.
Therefore,
$\dpcn{G_k,\PP^k,\QQ^k} \leq \dpcn{G,\PP,\QQ}$.

If each part of $\QQ$ intersects exactly one connected component of $G$, then the instance admits the following decomposition.

\begin{lemma}
\label{connectedcomponentsandisolatedvertices}
Let $G$ be a graph with $\ell$ connected components, each denoted by $G_k=(V_k, E_k)$ with $k\in [\ell]$, such that, for every $Q \in \QQ$, $Q \subseteq V_k$ for some $k \in [\ell]$.
For every $k \in [\ell]$, let $\PP^k = \{P \cap V_k: P \in \PP, \,P\cap V_k \neq \emptyset\}$ and $\QQ^k = \{Q \cap V_k: Q \in \QQ, \, Q\cap V_k \neq \emptyset\}$.
Then, $(G,\mathcal{P},\mathcal{Q})$ is feasible if and only if $(G_k, \mathcal{P}^k, \mathcal{Q}^k)$ is feasible for every $k\in [\ell]$. Moreover, if $(G,\mathcal{P},\mathcal{Q})$ is feasible, then
$\dpcn{G, \mathcal{P}, \mathcal{Q}}=\max \{\dpcn{G_k, \mathcal{P}^k, \mathcal{Q}^k}:k\in [\ell]\}$.
\end{lemma}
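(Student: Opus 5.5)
The plan is to prove both directions by restricting and gluing colorings, using two facts that hold under the hypotheses. First, by Remark~\ref{clique-estable} each $P\in\PP$ is a clique, so it lies inside a single connected component. By assumption, each $Q\in\QQ$ also lies inside a single component. Hence $\PP^k=\{P\in\PP: P\subseteq V_k\}$ and $\QQ^k=\{Q\in\QQ: Q\subseteq V_k\}$, and $\PP=\bigcup_k\PP^k$, $\QQ=\bigcup_k\QQ^k$ are disjoint unions. In particular, constraints \ref{def:DPCP:C1} and \ref{def:DPCP:C2} for $(G,\PP,\QQ)$ are exactly the union over $k$ of the corresponding constraints for $(G_k,\PP^k,\QQ^k)$.

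For the ``only if'' direction and the inequality $\max_k \dpcn{G_k,\PP^k,\QQ^k}\le \dpcn{G,\PP,\QQ}$, I will use the restriction argument stated just before the lemma. A double-partition coloring $c$ of $G$, restricted to $V_k$, is still proper on $G_k$. It satisfies \ref{def:DPCP:C1} and \ref{def:DPCP:C2} for each part of $\PP^k$ and $\QQ^k$, since these parts are original parts of $\PP$ and $\QQ$. The restriction uses at most as many colors as $c$. Thus feasibility passes down to each component, and each component optimum is at most the global optimum.

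For the ``if'' direction and the reverse inequality, take an optimal double-partition coloring $c_k$ of each $(G_k,\PP^k,\QQ^k)$. Let $t=\max_k \dpcn{G_k,\PP^k,\QQ^k}$. After renaming colors, I may assume every $c_k$ takes values in a common set $\{1,\dots,t\}$. Define $c$ on $V$ as the union of the $c_k$; this is well defined because the $V_k$ are disjoint. Three checks remain, and each uses the block structure.
\begin{itemize}
\item $c$ is proper, since every edge of $G$ lies inside some $E_k$.
\item $c$ satisfies \ref{def:DPCP:C1}, since each $P$ lies in some $\PP^k$ and is handled by $c_k$.
\item $c$ satisfies \ref{def:DPCP:C2}, since each $Q$ lies in some $\QQ^k$, so all its colored vertices are colored by the single map $c_k$ and receive one color.
\end{itemize}
Hence $c$ is feasible and uses at most $t$ colors, giving $\dpcn{G,\PP,\QQ}\le t$.

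The only step needing care is \ref{def:DPCP:C2} in the gluing. This is exactly where the hypothesis $Q\subseteq V_k$ is essential: the preceding Example shows the gluing fails when a part of $\QQ$ meets two components. The remaining steps are routine. The reuse of colors across components is harmless because distinct components share no edges and no parts.
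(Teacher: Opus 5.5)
Your proposal is correct and takes essentially the paper's route: the paper obtains $\max_k \dpcn{G_k,\PP^k,\QQ^k}\le \dpcn{G,\PP,\QQ}$ by restricting a coloring to each component (the remark just before the lemma), and the converse by merging the component colorings, exactly as you do. Your version spells out details the paper leaves implicit, namely that each $P$ lies in a single component because it is a clique, and that colors can be renamed into a common set before gluing.
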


The proof follows immediately from the fact that in this case a feasible double-partitioning coloring for $(G, \PP, \QQ)$ can be obtained by merging the colorings of the components.

If $G$ has a connected component consisting of an isolated vertex $v$, then removing $v$ preserves feasibility and does not change the optimal value, regardless of whether $Q_{j(v)}$ intersects different connected components.

\begin{lemma}\label{Isolated}
Let $G$ be a graph with an isolated vertex $v$ and $|V| \geq 2$.
Define $G' = G \setminus \{v\}$, $\PP' = \PP \setminus \{\{v\}\}$, and $\QQ' = \{ Q \setminus \{v\} : Q \in \QQ\}$.
Then, $(G,\mathcal{P},\mathcal{Q})$ is feasible if and only if $(G', \mathcal{P}', \mathcal{Q}')$ is feasible.
Moreover, if $(G', \mathcal{P}', \mathcal{Q}')$ is feasible,
$\dpcn{G, \mathcal{P}, \mathcal{Q}} = \dpcn{G', \mathcal{P}', \mathcal{Q}'}$.
\end{lemma}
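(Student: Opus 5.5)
The plan is to show that double-partition colorings of $(G,\PP,\QQ)$ and of $(G',\PP',\QQ')$ correspond to each other, in each direction, without increasing the number of colors. Both the feasibility equivalence and the equality of optimal values then follow at once.

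First I check that the reduced instance is well defined. By Remark~\ref{clique-estable}, every part of $\PP$ is a clique. Since $v$ is isolated, the part $P_{i(v)}$ containing it must be $\{v\}$, so $\{v\}\in\PP$ and $\PP'$ is indeed a partition of $V\setminus\{v\}$. The family $\QQ'$ may contain an empty set when $Q_{j(v)}=\{v\}$; I discard it, since it imposes no constraint. Because $|V|\geq 2$, $\PP'$ is nonempty, so every double-partition coloring of $G'$ colors at least one vertex and therefore uses at least one color.

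\emph{From $G$ to $G'$.} Let $c$ be a double-partition coloring of $(G,\PP,\QQ)$ and restrict it to $V\setminus\{v\}$. The restriction is still proper. Condition~\ref{def:DPCP:C1} holds for every $P\in\PP'$, because these parts are unchanged. Condition~\ref{def:DPCP:C2} holds for every $Q\setminus\{v\}$, because its colored vertices are a subset of the colored vertices of $Q$, which all share one color. The restriction uses no more colors than $c$. Hence feasibility of $(G,\PP,\QQ)$ implies feasibility of $(G',\PP',\QQ')$, and $\dpcn{G',\PP',\QQ'}\leq\dpcn{G,\PP,\QQ}$.

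\emph{From $G'$ to $G$.} Let $c'$ be a double-partition coloring of $(G',\PP',\QQ')$. I extend it by coloring $v$, which is necessary for \ref{def:DPCP:C1} on $\{v\}$.
\begin{itemize}
\item If $Q_{j(v)}\setminus\{v\}$ contains a colored vertex, give $v$ that vertex's color. By \ref{def:DPCP:C2} for $c'$, this color is the same for all such vertices.
\item Otherwise, give $v$ any color already used by $c'$; such a color exists by the observation above.
\end{itemize}
Since $v$ has no neighbors, the extension is proper. Conditions \ref{def:DPCP:C1} and \ref{def:DPCP:C2} hold for every part, and no new color is introduced. This yields the converse feasibility and $\dpcn{G,\PP,\QQ}\leq\dpcn{G',\PP',\QQ'}$.

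The only delicate point is the second case of the extension: it needs $c'$ to use at least one color. This is exactly where the hypothesis $|V|\geq 2$, together with $P_{i(v)}=\{v\}$, is used.
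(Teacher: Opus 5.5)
Your proposal is correct and matches the paper's argument: extend a coloring of $(G',\PP',\QQ')$ by giving $v$ the color already used in $Q_{j(v)}$, or otherwise any color already in use, and restrict in the other direction (the paper leaves this direction implicit). You also make explicit two points the paper passes over: $P_{i(v)}=\{v\}$ follows from the clique convention, and $|V|\geq 2$ guarantees that some color is available in the second case.
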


The proof is straightforward, since any optimal double-partition coloring of $(G', \mathcal{P}', \mathcal{Q}')$ can be extended to an optimal coloring of $(G, \mathcal{P}, \mathcal{Q})$ by assigning to $v$ the color used in $Q_{j(v)}$, if any vertex in $Q_{j(v)}$ is colored, or any color used in the solution, otherwise.

% \begin{lemma} \label{componentesconexasyaislados}
% Let $(G,\mathcal P, \mathcal Q)$ be an instance such that $G$ has at least two connected components, one of them denoted by $G'=(W, E_1)$, and let $G_2 = (V \setminus W, E \setminus E_1)$.
% Suppose that $\PP$ can be bipartitioned into $\{\PP_1, \PP_2\}$, where $\PP_1$ is a partition of $W$ and $P_2$ is a partition of $V \setminus W$ (and similarly for $\QQ$).
% % is a connected component of $G$ such that, for all $j\in [m]$, $Q_j\subseteq W$ or $Q_j\subseteq V\setminus W$,  $\mathcal P_1 = \{P_i\cap W: i\in [n], P_i\cap W\neq \emptyset \}$,  $\mathcal Q_1 = \{Q_j: j\in [m], Q_j \subset W\}$,  $\mathcal P_2 = \{P_i\cap V\setminus W: i\in [n], P_i\cap V\setminus W\neq \emptyset \}$, and  $\mathcal Q_2 = \mathcal Q\setminus \mathcal Q_1$. 
% Then, $(G,\mathcal P, \mathcal Q)$ is feasible if and only both $(G_1,\mathcal P_1, \mathcal Q_1)$ and $(G_2,\mathcal P_2, \mathcal Q_2)$ are feasible. 
% Moreover, if $(G,\mathcal P, \mathcal Q)$ is feasible, 
% $$\dpcn{G, \PP, \QQ}=\max\{\dpcn{G_1, \PP_1, \QQ_1}, \dpcn{G_2, \PP_2, \QQ_2}\}.$$
% In particular, if $W$ is a single isolated vertex and $(G_2,\mathcal P_2, \mathcal Q_2)$ is feasible, then
% $$\dpcn{G, \PP, \QQ}=\dpcn{G_2, \PP_2, \QQ_2}.$$ 
% \end{lemma}

\medskip

Finally we analyze feasibility of DPCP instances.
For the VCP, PCP, and CFCP, feasibility of any instance is guaranteed, provided that the number of colors is sufficiently large; however, this does not hold for the LCP.
In fact, deciding feasibility for the LCP is NP-complete \citep{golovach2017survey}, which in turn implies the NP-completeness of the feasibility problem for the DPCP. The smallest infeasible instance of the DPCP is given by the graph $G=(V,E)$ with $V=\{u,v\}$, $E=\{uv\}$, together with the partitions $\PP=\{\{u\},\{v\}\}$ and $\QQ=\{\{u,v\}\}$.

We define a \emph{set of representatives of} $(G,\PP,\QQ)$  as any subset $W \subseteq V$ 
such that $|W \cap P| = 1$ for each $P \in \PP$. 
Note that any double-partition coloring of $(G,\PP,\QQ)$ induces a set $W$ 
of representatives (namely, he colored nodes) and a proper coloring 
$c$ of $G[W]$. 

Unlike the PCP, given an arbitrary set $W$ of representatives, a proper coloring of the graph $G[W]$ cannot necessarily be extended to a double-partition coloring of $(G, \PP, \QQ)$. This is because the DPCP additionally requires all vertices 
belonging to the same $Q\in \QQ$ to be assigned the same color. Consequently, we must 
ensure that the intersection of $W$ with each $Q\in \QQ$ is a stable set in $G$. A set of representatives satisfying the latter condition will in the following be called a \emph{feasible set of representatives}.

%Based on these observations, we obtain the following feasibility characterization for an instance:
%\begin{lemma}\label{lemma.feasible.solutions}

\begin{remark}\label{lemma.feasible.solutions}
The set of colored vertices of any double-partition coloring of $(G,\PP,\QQ)$ is a feasible set of representatives of $(G,\PP,\QQ)$. Conversely, if $W$ is a feasible set of representatives,  assigning one color for each set $W \cap Q$, $Q \in \QQ$, yields a double-partition coloring of $(G,\PP,\QQ)$. Therefore, 
$(G,\PP,\QQ)$ is feasible if and only if %there exists a set of representatives $W$ such that $W\cap Q$ is a stable set of $G$ for every $Q \in \QQ$. 
there exists a feasible set of representatives.
\end{remark}

% From Remark  \ref{lemma.feasible.solutions},  the double partition chromatic number is the minimum of $\chi(G[W])$ taken over all feasible sets of representatives $W$. The following lemma shows that 
% \quitar{
% to compute $\chi(G[W])$ we can work  on a smaller graph obtained by contracting the vertices of each $Q_j \cap W$ in $G[W]$ into a single vertex $w_j$.} 
% {\color{red} we can actually work on coloring a graph a smaller than $G[W]$.}

To compute $\dpcn{G,\PP,\QQ}$, given a subset of vertices $W$, we define the graph 
%$\hat G_W$ $G[W]_\QQ$ $G(W,\QQ)$ 
$G[W]/\QQ$ with a vertex $w_j$ for each $j \in [m]$ such that $W \cap Q_j \neq \emptyset$.
Two vertices $w_{j_1}$ and $w_{j_2}$ are adjacent in $G[W]/\QQ$ if and only if there exist $v_1 \in W \cap Q_{j_1}$ and $v_2 \in W \cap Q_{j_2}$ such that $v_1v_2 \in E$.

\begin{proposition}\label{prop.2.step.heuristic}
For any feasible instance $(G, \PP, \QQ)$ of DPCP, $$\dpcn{G,\PP,\QQ}=\min \{\chi(G[W]/\QQ): W \text{ is a feasible set of representatives} \}.$$ 
\end{proposition}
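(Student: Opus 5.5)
We prove the equality by establishing the two inequalities separately, using Remark~\ref{lemma.feasible.solutions} to link double-partition colorings with feasible sets of representatives. Since the instance is feasible, that remark guarantees that the minimum on the right-hand side is taken over a nonempty finite family, so it is well defined.

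\emph{Step 1: $\dpcn{G,\PP,\QQ} \geq$ right-hand side.} Take an optimal double-partition coloring $c$ and let $W$ be its set of colored vertices. By Remark~\ref{lemma.feasible.solutions}, $W$ is a feasible set of representatives. By \ref{def:DPCP:C2}, all vertices of $W\cap Q_j$ share one color, so we may define $\hat c(w_j)=c(v)$ for any $v\in W\cap Q_j$; this is well defined. We check that $\hat c$ is a proper coloring of $G[W]/\QQ$. Suppose $w_{j_1}w_{j_2}$ is an edge. By definition there exist $v_1\in W\cap Q_{j_1}$ and $v_2\in W\cap Q_{j_2}$ with $v_1v_2\in E$. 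Since $c$ is proper, $\hat c(w_{j_1})=c(v_1)\neq c(v_2)=\hat c(w_{j_2})$. The coloring $\hat c$ uses exactly the colors of $c$, hence $\chi(G[W]/\QQ)\le \dpcn{G,\PP,\QQ}$.

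\emph{Step 2: $\dpcn{G,\PP,\QQ} \leq$ right-hand side.} Let $W$ attain the minimum and let $\hat c$ be an optimal proper coloring of $G[W]/\QQ$. Define $c$ on $W$ by $c(v)=\hat c(w_{j(v)})$. Condition \ref{def:DPCP:C1} holds because $W$ is a set of representatives. Condition \ref{def:DPCP:C2} holds by construction, since every colored vertex of $Q_j$ receives $\hat c(w_j)$. It remains to show that $c$ is proper, and this is the only delicate point. Let $v_1,v_2\in W$ be adjacent.
\begin{itemize}
\item If $j(v_1)\neq j(v_2)$, then $w_{j(v_1)}w_{j(v_2)}$ is an edge of $G[W]/\QQ$, so $c(v_1)\neq c(v_2)$.
\item If $j(v_1)=j(v_2)$, then $v_1,v_2$ are two adjacent vertices of $W\cap Q_{j(v_1)}$, which contradicts feasibility of $W$, since $W\cap Q$ must be stable for every $Q\in\QQ$.
\end{itemize}
Hence $c$ is a double-partition coloring with $\chi(G[W]/\QQ)$ colors.

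Combining the two steps gives the equality. The main point to watch is the second case of Step~2, the only place where the stability requirement in the definition of a feasible set of representatives is used. It also explains why $G[W]/\QQ$ has no loops: an edge inside a single part $Q_j$ would otherwise make the contracted vertex $w_j$ uncolorable.
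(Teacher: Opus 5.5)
Your proof is correct and follows essentially the same route as the paper. In both, a double-partition coloring with colored set $W$ becomes a proper coloring of $G[W]/\QQ$ by giving $w_j$ the common color of $W\cap Q_j$, and conversely each $v\in W$ takes the color of $w_{j(v)}$, with stability of $W\cap Q$ used exactly where you use it. The paper phrases this as showing that, for each fixed feasible $W$, the minimum number of colors of a double-partition coloring on $W$ equals $\chi(G[W]/\QQ)$; your split into two inequalities is only a presentational difference.
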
 

\begin{proof}
It is enough to prove that the minimum number of colors in a double-partition coloring defined on a feasible set of representatives $W$ is $\cn{G[W]/\QQ}$. 

A double-partition coloring of $(G, \PP, \QQ)$ defined on $W$ corresponds to a total proper coloring of $G[W]/\QQ$ in which each vertex $w_j \in V(G[W]/\QQ)$ is assigned the unique color used in $Q_j$. 
Conversely, since $W\cap Q$ is a stable set of $G$ for every $Q\in \QQ$, a total proper coloring of $G[W]/\QQ$ corresponds to a double-partition coloring of $(G, \PP, \QQ)$ in which each $v \in W$ is assigned the color of $w_{j(v)}$. 
Both constructions preserve the number of colors used.
\end{proof}

Remark \ref{lemma.feasible.solutions} shows that the feasibility of an instance does not depend on the edges between distinct parts of $\QQ$. 
Therefore, by removing these edges while retaining those that make each part of $\PP$ a clique, we can reduce the feasibility problem to the Maximum Stable Set Problem. Formally,

\begin{proposition}\label{Gtilde} Given  $(G,\PP,\QQ)$ with $G=(V,E)$, let $\tilde{G}=(V,\tilde{E})$, with 
$\tilde{E} = \{uv \in E: i(u) = i(v) \vee j(u) = j(v)\}$.
Then, $(G,\PP,\QQ)$ is feasible if and only if $\alpha(\tilde{G})= n$.
\end{proposition}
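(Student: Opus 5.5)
The plan is to reduce the statement to Remark~\ref{lemma.feasible.solutions}. That remark says $(G,\PP,\QQ)$ is feasible if and only if a feasible set of representatives exists. So it suffices to show two things: first, that $\alpha(\tilde G)\le n$ always holds; second, that the stable sets of $\tilde G$ of size $n$ are exactly the feasible sets of representatives of $(G,\PP,\QQ)$.

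For the upper bound, we use the standing assumption (Remark~\ref{clique-estable}) that each $P\in\PP$ induces a clique in $G$. Any two distinct $u,v\in P$ satisfy $i(u)=i(v)$ and $uv\in E$, so $uv\in\tilde E$. Hence every $P_i$ is also a clique in $\tilde G$. A stable set of $\tilde G$ therefore meets each of the $n$ parts of $\PP$ in at most one vertex, which gives $\alpha(\tilde G)\le n$.

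Next, let $S$ be a stable set of $\tilde G$ with $|S|=n$. Since $|S\cap P_i|\le 1$ for all $i$, the sets $P_i$ partition $V$, and $|S|=n$, we get $|S\cap P_i|=1$ for every $i$. So $S$ is a set of representatives. It remains to check that $S\cap Q_j$ is stable in $G$. Take $u,v\in S\cap Q_j$ with $uv\in E$. Then $j(u)=j(v)$, so $uv\in\tilde E$, which contradicts the stability of $S$ in $\tilde G$. Hence $S$ is a feasible set of representatives.

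Conversely, let $W$ be a feasible set of representatives. Then $|W|=n$, and we claim $W$ is stable in $\tilde G$. Suppose $u,v\in W$ with $uv\in\tilde E$. Then $uv\in E$ and either $i(u)=i(v)$ or $j(u)=j(v)$. The first case is impossible, because $W$ contains exactly one vertex of each part of $\PP$. The second case is impossible, because $W\cap Q_{j(u)}$ is stable in $G$. So $W$ is stable in $\tilde G$, and together with the upper bound this gives $\alpha(\tilde G)=n$.

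Combining both directions with Remark~\ref{lemma.feasible.solutions} proves the equivalence. The only delicate point is the clique assumption on the parts of $\PP$: it is what yields $\alpha(\tilde G)\le n$, and it is what makes a stable set of size $n$ a set of representatives. Without it, we would first have to note that replacing each part of $\PP$ by a clique changes neither feasibility nor the definition of $\tilde E$ restricted to pairs inside a part. Everything else is a direct check of the definitions.
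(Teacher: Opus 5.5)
Your proposal is correct and follows essentially the same argument as the paper: $\alpha(\tilde G)\le n$ because each part of $\PP$ remains a clique in $\tilde G$, and stable sets of size $n$ in $\tilde G$ are exactly the feasible sets of representatives, so Remark~\ref{lemma.feasible.solutions} closes the equivalence. In the direction from a feasible set of representatives $W$ to a stable set of $\tilde G$, your case analysis is more accurate than the paper's: the paper claims $j(u)\neq j(v)$ for all $u,v\in W$, which is false in general because $W$ may contain several non-adjacent vertices of the same part of $\QQ$, whereas what is needed is exactly your observation that two vertices of $W$ in a common part of $\QQ$ are non-adjacent in $G$.
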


\begin{proof}
It is clear that $\alpha(\tilde G)\leq n$, as any set of more than $n$ vertices contains at least two vertices belonging to the same $P\in \PP$, which are adjacent in $\tilde{G}$.

Let $(G, \PP, \QQ)$ be a feasible instance of DPCP. By the previous lemma, there exists a set of representatives $W$ such that, for all $Q \in \QQ$, $W \cap Q$ is a stable set of $G$. Then, for any $u, v \in W$ we have $i(u)\neq i(v) \text{ and } j(u)\neq j(v)$, i.e., $uv \notin \tilde{E}$. Therefore, $W$ is a stable set of $\tilde{G}$ and $\card{W} = n$. We can conclude that $\alpha(\tilde{G}) = n$.

Conversely, assume $\alpha(\tilde{G}) = n$ and let $S \subseteq V$ be a stable set of maximum cardinality in $\tilde{G}$. Thus, we have $\card{S \cap P} = 1$ for all $P \in \PP$.
Furthermore, consider two nodes  $u, v \in S \cap Q$, for some $Q\in \QQ$. Since $S$ is a stable set in $\tilde{G}$, $uv \notin \tilde{E}$ and it follows that $uv \notin E$. Therefore, $S \cap Q$ is a stable set of $G$ for each $Q \in \QQ$.
\end{proof}

An example of the construction described in Proposition \ref{Gtilde} is illustrated in Figure \ref{fig:ex-feasibility}. 

\begin{figure}
\centering
\begin{subfigure}{0.41\textwidth}
\centering\begin{tikzpicture}

\draw[draw=white] (0,-1) rectangle (1.5,3.5);

\draw[draw=black!20, fill=black!10, opacity=0.8, line width=1pt] (0.75,3.25) to[closed,curve through={(0,2.5) .. (-0.5,1.75) .. (0.75,1.75) .. (2,1.75) .. (1.5,2.5)}] (0.75,3.25);
\draw[draw=black!20, fill=black!15, opacity=0.8, line width=1pt] (-0.4,1.25) to[closed,curve through={(0.55,0.3) .. (1.85,-0.35) .. (0.95,0.7)}] (-0.4,1.25);
\draw[draw=black!20, fill=black!20, opacity=0.8, line width=1pt] (1.9,1.25) to[closed,curve through={(0.55,0.7) .. (-0.35,-0.35) .. (0.95,0.3)}] (1.9,1.25);

\draw[line width=1pt] (0,0) to (0,1);
\draw[line width=1pt] (0,0) to (1.5,1);
%\draw[line width=1pt] (0,0) to[bend left=90, looseness=0.5] (0,2);

\draw[line width=1pt] (1.5,0) to (1.5,1);
\draw[line width=1pt] (1.5,0) to (0,1);
%\draw[line width=1pt] (1.5,0) to[bend right=90, looseness=0.5] (1.5,2);

\draw[line width=1pt] (0,1) to (0,2);
\draw[line width=1pt] (1.5,0) to (0,2);
\draw[line width=1pt] (0,0) to (1.5,2);
\draw[line width=1pt] (1.5,1) to (1.5,2);

\draw[line width=1pt] (0,2) to (1.5,2);
\draw[line width=1pt] (0,2) to (0.75,3);

\draw[line width=1pt] (1.5,2) to (0.75,3);

\draw[fill=white, draw=black, line width=1pt] (0,0) circle (3.5pt);
%\draw[draw=black, line width=1pt] (0,0) circle (0.75pt);
\draw[fill=white, draw=black, line width=1pt] (1.5,0) circle (3.5pt);
\draw[fill=white, draw=black, line width=1pt] (0,1) circle (3.5pt);
\draw[fill=white, draw=black, line width=1pt] (1.5,1) circle (3.5pt);
\draw[fill=white, draw=black, line width=1pt] (0,2) circle (3.5pt);
\draw[fill=white, draw=black, line width=1pt] (1.5,2) circle (3.5pt);
\draw[fill=white, draw=black, line width=1pt] (0.75,3) circle (3.5pt);

\node at (0.75,2.35) {$P_1$};
\node at (-0.1,-0.35) {$P_2$};
\node at (1.6,-0.35) {$P_3$};

\end{tikzpicture}
\begin{tikzpicture}

\draw[draw=white] (0,-1) rectangle (1.5,3.5);
\draw[draw=black!20, fill=black!10, opacity=0.8, line width=1pt] (-0.25,2.25) to[closed,curve through={(0.5,1) .. (1.8,-0.3) .. (0.9,1)}] (-0.25,2.25);
\draw[draw=black!20, fill=black!14, opacity=0.8, line width=1pt] (-0.6,1) to[closed,curve through={(0.75,1.2) .. (2.2,1) .. (0.75,0.8)}] (-0.6,1);
\draw[draw=black!20, fill=black!18, opacity=0.8, line width=1pt] (1.75,2.25) to[closed,curve through={(0.5,1) .. (-0.3,-0.3) .. (0.9,1)}] (1.75,2.25);
\draw[draw=black!20, fill=black!22, opacity=0.8, line width=1pt] (0.75,3) ellipse (20pt and 10pt) node [left=0.1cm] {$Q_1$};

\draw[line width=1pt] (0,0) to (0,1);
\draw[line width=1pt] (0,0) to (1.5,1);
%\draw[line width=1pt] (0,0) to[bend left=90, looseness=0.5] (0,2);

\draw[line width=1pt] (1.5,0) to (1.5,1);
\draw[line width=1pt] (1.5,0) to (0,1);
%\draw[line width=1pt] (1.5,0) to[bend right=90, looseness=0.5] (1.5,2);

\draw[line width=1pt] (0,1) to (0,2);
\draw[line width=1pt] (1.5,0) to (0,2);
\draw[line width=1pt] (0,0) to (1.5,2);
\draw[line width=1pt] (1.5,1) to (1.5,2);

\draw[line width=1pt] (0,2) to (1.5,2);
\draw[line width=1pt] (0,2) to (0.75,3);

\draw[line width=1pt] (1.5,2) to (0.75,3);

\draw[fill=white, draw=black, line width=1pt] (0,0) circle (3.5pt);
%\draw[draw=black, line width=1pt] (0,0) circle (0.75pt);
\draw[fill=white, draw=black, line width=1pt] (1.5,0) circle (3.5pt);
\draw[fill=white, draw=black, line width=1pt] (0,1) circle (3.5pt);
\draw[fill=white, draw=black, line width=1pt] (1.5,1) circle (3.5pt);
\draw[fill=white, draw=black, line width=1pt] (0,2) circle (3.5pt);
\draw[fill=white, draw=black, line width=1pt] (1.5,2) circle (3.5pt);
\draw[fill=white, draw=black, line width=1pt] (0.75,3) circle (3.5pt);

\node at (0,-0.35) {$Q_2$};
\node at (2,1) {$Q_4$};
\node at (1.5,-0.35) {$Q_3$};

\end{tikzpicture}
\caption{$(G, \PP,\QQ)$}
\label{fig.dpcp}
\end{subfigure}
\begin{subfigure}{0.41\textwidth}
\centering
\begin{tikzpicture}
\draw[draw=white] (0,-1) rectangle (1.5,3.5);
%\node at (-0.75,2.95) {$\tilde G$};
\draw[black, dotted] (0,0) to (0,1);
\draw[line width=1pt] (0,0) to (1.5,1);
%\draw[line width=1pt] (0,0) to[bend left=90, looseness=0.5] (0,2);

\draw[black, dotted] (1.5,0) to (1.5,1);
\draw[line width=1pt] (1.5,0) to (0,1);
%\draw[line width=1pt] (1.5,0) to[bend right=90, looseness=0.5] (1.5,2);

\draw[black, dotted] (0,1) to (0,2);
\draw[line width=1pt] (1.5,0) to (0,2);
\draw[line width=1pt] (0,0) to (1.5,2);
\draw[black, dotted] (1.5,1) to (1.5,2);

\draw[line width=1pt] (0,2) to (1.5,2);
\draw[line width=1pt] (0,2) to (0.75,3);

\draw[line width=1pt] (1.5,2) to (0.75,3);

\filldraw[fill=white, draw=black, line width=1pt] (0,0) circle (3.5pt); %node [below left] {\footnotesize $(E_3,v_3)$};
%\filldraw[draw=black, line width=1pt] (0,0) circle (0.75pt);
\filldraw[fill=white, draw=black, line width=1pt] (1.5,0) circle (3.5pt); %node [below right] {\footnotesize $(E_2,v_1)$};
\filldraw[fill=white, draw=black, line width=1pt] (0,1) circle (3.5pt); %node [left=0.25cm] {\footnotesize $(E_2,v_2)$};
\filldraw[fill=white, draw=black, line width=1pt] (1.5,1) circle (3.5pt); %node [right=0.25cm] {\footnotesize $(E_3,v_2)$};
\filldraw[fill=white, draw=black, line width=1pt] (0,2) circle (3.5pt); %node [above left] {\footnotesize $(E_1,v_1)$};
\filldraw[fill=white, draw=black, line width=1pt] (1.5,2) circle (3.5pt); %node [above right] {\footnotesize $(E_1,v_3)$};
\filldraw[fill=white, draw=black, line width=1pt] (0.75,3) circle (3.5pt); %node [above] {\footnotesize $(E_1,v_4)$};
\end{tikzpicture}
\caption{$\tilde G=(V,\tilde E)$}
%\label{fig.conflict.graph}
\end{subfigure}
\caption{
Instance $(G,\PP,\QQ)$ is feasible if and only if $\tilde G$ contains a stable set of size $|\PP| = 3$. %{\color{blue} $\tilde G$ se obtiene eliminando de $G$ las aristas que unen vértices que pertenecen a distintas partes de $\PP$ y distintas partes de $\QQ$ }
}
\label{fig:ex-feasibility}
\end{figure}

\section{Heuristic approaches for DPCP}
\label{sec:heuristics}

This section describes heuristic approaches for DPCP based on techniques originally developed for PCP. A key difference between the two problems is that DPCP may admit infeasible instances, which makes feasibility a central concern in the design of these heuristics.

For PCP, \citet{li2000partition} propose two strategies depending on whether vertex selection and color assignment are carried out sequentially or simultaneously.
\emph{Two-step} heuristics first select a vertex from each part of the first partition and then color the subgraph induced by these vertices.
In contrast, in \emph{one-step} heuristics, a valid color is immediately assigned to a selected vertex before the next vertex is chosen.
The criteria according to which vertices and colors are selected are the main design choices and give rise to different, yet related, heuristics.
Among these, the color-degree criteria, inspired by the DSatur heuristic for VCP \citep{brelaz1979new}, exhibited the best performance, particularly within the one-step approach, referred to as \emph{onestepCD}.

% Mejores heuristicas de PCP

% \textbf{onestepCD:} [Preprocesamiento se eliminan todas las aristas cuyos vértices estén en el mismo grupo.] Se busca el vértice con el menor grado de saturación para cada grupo sin color. Si hay un empate, se elige el vértice con el menor uncolored degree (número de vértices adyacentes sin color hasta el momento). De estos vértices, se encuentra el vértice con el mayor grado de saturación. Si hay un empate, se elige el vértice con el mayor uncolored degree. Se asigna a este vértice el color más pequeño disponible. Se eliminan todos los demás vértices del mismo grupo. Se repite todo el proceso hasta que todos los grupos estén coloreados. 

% \textbf{twostepCD:} [Preprocesamiento se eliminan todas las aristas cuyos vértices estén en el mismo grupo.] De todos los grupos sin procesar, elige el vértice con el menor grado y eliminan todos los demás vértices del mismo grupo que el vértice seleccionado. Repiten este proceso hasta que cada grupo tenga exactamente un vértice seleccionado. Todos estos vértices seleccionados inducen un subgrafo del grafo original. Una vez construido el subgrafo inducido, colorear con DSatur.

In what follows, we consider a DPCP instance $(G, \PP, \QQ)$, with $G = (V,E)$, $\PP = \{P_i: i \in [n]\}$, and $\QQ = \{Q_j: j \in [m]\}$.
For convenience, this section assumes that each $P_i \in \PP$ is a stable set of $G$, so that the degree of $v$ is not influenced by the size of $P_{i(v)}$.
Any tie arising in the application of the heuristics is ultimately resolved according to an index-based criterion.

\subsection{Two-step heuristics}\label{sec:heuristics:2:step}

For DPCP, the vertex-selection phase aims to construct, whenever possible, a feasible set of representatives $W$. 
Algorithm \ref{algo.2-step.I} presents a greedy heuristic for constructing the set $W$ iteratively.
Let $I$ denote the set of indices corresponding to the uncolored parts of $\PP$.
At each iteration, an uncolored part $P_i$ with $i \in I$ and a vertex $v \in P_i$ are chosen according to a given criterion.
Before proceeding to the next iteration, all vertices in $P_{i} \setminus \{v\}$ and all neighbors of $v$ in $Q_{j(v)}$ are removed from the instance (from the graph and the partitions), thereby avoiding superfluous vertices and ensuring that $W \cap Q_{j(v)}$ remains a stable set, respectively.
After these removals, it may happen that some part of $\PP$ becomes empty, in which case the heuristic fails.

\begin{algorithm}
\caption{greedy heuristic for the vertex-selection phase}
\label{algo.2-step.I}
\small
\KwData{$G=(V,E)$ and partitions $\PP$,$\QQ$ of $V$}
$W \gets \emptyset$; $I \gets [n]$\;
\While{$I \neq \emptyset$}{
select $i \in I$ and $v \in P_i$ with some criterion\;
$W \gets W \cup \{v\}$; $I \gets I \setminus \{i\}$\;
remove vertices in $P_{i} \setminus \{v\}$ and $N(v) \cap Q_{j(v)}$ from the instance\;
\If{$P_{k} = \emptyset$ for some $k \in I$}{
    \textbf{fail}\;
}
}
\textbf{return} $W$\;
\end{algorithm}

Regardless of the vertex-selection criterion, Algorithm \ref{algo.2-step.I} is guaranteed to terminate successfully whenever every $Q_j \in \QQ$ induces a stable set in $G$. 
This immediately yields the following result.

\begin{remark}\label{remark.2.step.heuristic.success}
For instances of DPCP arising from VCP, PCP and CFCP, Algorithm \ref{algo.2-step.I} always succeeds.
\end{remark}

However, for general instances of DPCP, as well as for instances arising from LCP, Algorithm \ref{algo.2-step.I} may fail to produce a feasible solution. 
To increase the likelihood of obtaining a feasible solution, in line 3 we select an uncolored set $P_i \in \PP$ of minimum cardinality. 
We then consider three criteria for choosing the vertex $v$ from $P_i$.
All criteria are evaluated on the current graph at the corresponding iteration. Ties are broken by selecting the vertex $v$ with the fewest neighbors in $Q_{j(v)}$.
\begin{itemize}
\item[] \textsc{Deg.} Select a vertex of minimum degree (used by \cite{li2000partition} for PCP).
\item[] \textsc{QDeg.} Select a vertex $v$ whose neighborhood intersects the fewest parts of $\QQ$. An adaptation of \textsc{Deg} that takes into account the future contraction of the parts of $\QQ$ before coloring.
\item[] \textsc{SQDeg.} 
For each candidate vertex $v$, compute its degree in the graph obtained by contracting the parts of $\QQ$ in $G[W \cup \{v\}]$, and select the vertex with minimum degree. Here, $W$ denotes the set of already selected vertices. Unlike \textsc{Deg} and \textsc{QDeg}, this criterion ignores adjacencies involving non-selected vertices, many of which may never be selected.
\end{itemize}

For the coloring phase, Proposition \ref{prop.2.step.heuristic} implies that the problem can be reduced to solve VCP on the graph $G[W]/\QQ$.  Although any VCP algorithm could be used for this task, an optimal coloring of $G[W]/\QQ$ does not necessarily yield an optimal double-partition coloring of the original DPCP instance, since the set $W$ is fixed. 
Therefore, we employ the classical DSatur heuristic of \cite{brelaz1979new} to color $G[W]/\QQ$. 

Motivated by the good performance of the two-step heuristics, which will be discussed in Section \ref{sec:B&Pconfigurations}, we propose a semi-greedy variant of Algorithm \ref{algo.2-step.I} by introducing randomness through a restricted candidate list (RCL) \citep{Feo1995}.
Vertices whose value under the selection criterion lies within $\alpha\%$ of the best are included in the RCL, and one of them is chosen at random and added to $W$.
The modified heuristic is run a fixed number of times, each time followed by the coloring phase. 
The best solution found, i.e., the one using the fewest colors, is returned.

\subsection{One-step heuristics}\label{sec:heuristics:1:step}

The overall structure of the one-step greedy heuristics for DPCP is fairly similar to Algorithm \ref{algo.2-step.I}, with two additional features:
a color is immediately assigned once a vertex is selected, and additional vertices that conflict with this assignment are removed. 
The resulting coloring is returned at the end.
Further details on the required modifications are specified below.

After a vertex $v$ is selected in line 3, its color must be carefully chosen to maintain feasibility, unlike in PCP, where any color not used by its neighbors can be assigned.
This implies that, if $v$ is not the first colored vertex in $Q_{j(v)}$, it must receive the color already used in $Q_{j(v)}$.
Otherwise, a color not used by the neighbors of $v$ must be selected.
However, this choice of color is not arbitrary, since each color invalidates a specific set of vertices.
Specifically, assigning a color $c$ to $v$ removes all vertices in $Q_{j(v)}$ that are adjacent to a vertex colored with $c$, as well as any neighbor $u$ of $v$ such that $j_u \neq j_v$ and $Q_{j(u)}$ already contains a vertex colored with $c$.
These vertices are removed after line 5, following the removals triggered by the selection of $v$.

Our vertex-selection criterion is directly inspired by \emph{onestepCD} for PCP, but we adopt a different tie-breaking strategy.
First, for each uncolored part $P_i \in \PP$, a candidate vertex is selected. The candidate is a vertex of minimum saturation degree in $P_i$, where the saturation degree corresponds to the number of distinct colors appearing in its neighborhood. 
Ties are broken analogously to the two-step heuristics, considering only uncolored neighbors. 
Finally, among all candidates, we select a vertex $v$ of maximum saturation degree, breaking ties in favor of the candidate whose part $P_{i(v)}$ has minimum cardinality.
If the color of $v$ is not forced, we assign a color not used by its neighbors that invalidates the minimum number of vertices, avoiding colors that would make any part in $\PP$ empty.
If no such color exists, a new color is assigned to $v$, increasing the chromatic number.

%From a theoretical perspective, one-step heuristics are more failure-prone than two-step heuristics, as both vertex and color selection may trigger vertex removals. Indeed, when the color of a selected vertex is forced by previous decisions, failure may be unavoidable in the absence of repair mechanisms.
%Consequently, under this constructive setting for one-step heuristics \textcolor{red}{(con esto quiero decir, sin reparar o volver atras en nuestras decisiones, ¿se entiende?)}, the existence of competitive vertex- and color-selection criteria that guarantee success on instances of DPCP arising from VCP, PCP, and CFCP remains an open question. \todo{Eliminar esta ultima oracion? }\todo{De acuerdo!}

\section{A Branch-and-Price algorithm for solving DPCP}
\label{sec:branch-and-price-algo}

There are two predominant approaches in the literature for formulating graph coloring problems as integer linear programs (ILPs). The first consists of compact formulations that use decision variables to assign colors to vertices. The second relies on so-called \emph{set covering formulations}, which involve an exponential number of binary decision variables, each corresponding to a stable set of the graph and representing a potential color class. The following formulation for the DPCP, proposed in \cite{LucciEtAl2025}, belongs to the latter class.

Given an instance $(G,\PP,\QQ)$, let $\STAB(G)$ denote the collection of all stable sets of $G$. For each $S \in \STAB(G)$, we define a binary variable $x_S$ that takes value $1$ if and only if the stable set $S$ is \emph{active}, i.e., if it is selected as a color class in the solution. Then, the DPCP can be formulated as follows:
\begin{align}
(\setcoveringform) \quad \min & \sum_{S \in \STAB(G)} x_S \label{SCFormulation.Obj}\\
s.t. & \sum_{S \in \STAB(G): P \cap S \neq \emptyset} x_S \geq 1, &  P \in \PP, \label{SCFormulationA} \\
    & \sum_{S \in \STAB(G): Q \cap S \neq \emptyset} x_S \leq 1, &  Q \in \QQ, 
    %\; \color{red} \card{Q} \geq 2,\color{black} 
    \label{SCFormulationB} \\
& x \in \{0,1\}^{\STAB(G)}. \nonumber
\end{align}

The objective function~\eqref{SCFormulation.Obj} minimizes the number of active stable sets (color classes) used in the solution. Constraints~\eqref{SCFormulationA} ensure that, for each $P \in \PP$, at least one vertex of $P$ belongs to an active stable set and is therefore colored. Finally, constraints~\eqref{SCFormulationB} ensure that each set $Q \in \QQ$ intersects at most one active stable set, i.e., all colored vertices in $Q$ receive the same color.
%\todo{Está un poco enredado. Yo diria que cada $Q$ se interecta con a lo sumo un estable activo}

Observe that, for any singleton $\{v\} \in \QQ$, the corresponding constraint in~\eqref{SCFormulationB} can be removed from \setcoveringform\ without affecting the optimal value. Indeed, removing such a constraint may yield a solution in which vertex $v$ belongs to multiple color classes. In that case, removing $v$ from all but one of these color classes produces a feasible solution to \setcoveringform\ with the same objective value.

Hence, throughout the remainder of the paper we assume that constraints~\eqref{SCFormulationB} are enforced only for sets $Q \in \QQ$ with $|Q| \geq 2$, i.e., they are replaced by
\begin{equation}
    \sum_{S \in \STAB(G): Q \cap S \neq \emptyset} x_S \leq 1, \quad  Q \in \QQ, \; \card{Q} \geq 2.
    \tag{\ref{SCFormulationB}'}
    \label{SCFormulationB2}
\end{equation}

In the remainder of this section, we describe the main components of a B\&P algorithm for solving \setcoveringform. The pricing problem, the initialization procedure, and the branching scheme are presented in the following subsections. Throughout this section, we assume that an instance $(G,\PP,\QQ)$ is given, with $G=(V,E)$, $\PP=\{P_i:i\in[n]\}$, and $\QQ=\{Q_j:j\in[m]\}$. We also assume that each set $P_i$ induces a clique in $G$, since all missing edges are added during a preprocessing step.

For a comprehensive overview of the classical components of a B\&P framework, we refer the reader to \cite{Desrosiers2026} and, in the context of graph coloring problems, to \cite{Trick1996,Furini2018}.

\subsection{Pricing problem}\label{sec:branch-and-price-algo-pricing}

In the following, we assume without loss of generality that constraints~\eqref{SCFormulationB} (or, equivalently, constraints~\eqref{SCFormulationB2}) in formulation \setcoveringform\ have been multiplied by $-1$. Therefore, given a stable set $S \in \STAB(G)$, its associated column in \setcoveringform\ is the $0/1$ vector $x = (x^P, -x^Q) \in \{0,1\}^{n+m}$, where $x^P$ and $x^Q$ are the characteristic vectors of the sets
$\PP(S) = \{i(v) \in [n]: v \in S\}$ and $\QQ(S) = \{j(v) \in [m]: v \in S\}$,
respectively.

Observe that two distinct stable sets $S, S' \in \STAB(G)$ may correspond to the same column if $\PP(S) = \PP(S')$ and $\QQ(S) = \QQ(S')$. In our implementation, each column is linked to a single stable set, which allows colorings to be reconstructed from solutions of \setcoveringform.
Given a subset $\hat{\STAB} \subseteq \STAB(G)$, denote by $\setcoveringform(\hat{\STAB})$ the restriction of \setcoveringform\ to the variables associated with stable sets in $\hat{\STAB}$. The pricing problem consists of determining, given an optimal solution $x^*$ to the linear relaxation of $\setcoveringform(\hat{\STAB})$, whether there exists a column associated to a stable set in $\STAB(G) \setminus \hat{\STAB}$ with negative reduced cost.

%{\color{red}pondria parrafos 2 y 3 como parrafos 1 y 2, y el 1 como tercero para enganchar mejor}

Let $(\gamma^*, \mu^*) \in \mathbb{R}_+^{n+m}$ be an optimal solution to the dual of the linear relaxation of $\setcoveringform(\hat{\STAB})$. For any stable set $S \in \STAB(G)$, the reduced cost of the corresponding column is given by
\[
\bar{c}_S = 1 - \left( \sum_{i \in \PP(S)} \gamma^*_i - \sum_{j \in \QQ(S)} \mu^*_j \right).
\]
Hence, the pricing problem consists of finding a stable set $S \in \STAB(G) \setminus \hat{\STAB}$ such that $\bar{c}_S < 0$, or proving that no such set exists. Note that this condition is equivalent to
\[
\sum_{i \in \PP(S)} \gamma^*_i - \sum_{j \in \QQ(S)} \mu^*_j > 1.
\]

\begin{proposition}
The pricing problem is NP-complete.
\end{proposition}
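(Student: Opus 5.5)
We work with the decision version of the pricing problem: given $(G,\PP,\QQ)$ and rational weights $\gamma\in\R_+^n$, $\mu\in\R_+^m$, decide whether some stable set $S\in\STAB(G)$ satisfies $\sum_{i\in\PP(S)}\gamma_i-\sum_{j\in\QQ(S)}\mu_j>1$. The restriction $S\notin\hat{\STAB}$ is harmless. Since the number of columns in $\hat{\STAB}$ is polynomial, we may take $\hat{\STAB}=\emptyset$ for the hardness part; for membership, finitely many excluded sets only need to be checked against the certificate.

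\emph{Membership in NP.} The certificate is $S$ itself. We verify in polynomial time that $S$ is stable, compute $\PP(S)$ and $\QQ(S)$ using the indices $i(v)$ and $j(v)$, evaluate the expression, and check that $S\notin\hat{\STAB}$.

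\emph{NP-hardness.} We reduce from the decision version of the Maximum Stable Set Problem: given a graph $H=(V,E)$ and an integer $k\ge 1$, does $H$ have a stable set of size at least $k$? The DPCP instance is $G=H$ with $\PP=\QQ=\{\{v\}:v\in V\}$, which is exactly the PCP/VCP special case noted in Section~\ref{sec:definition:dpcp}. The singleton constraints in $\QQ$ are dropped by (\ref{SCFormulationB2}), so all $\mu_j=0$. We set $\gamma_v=1/(k-1/2)$ for every $v$, which is positive. Then $\PP(S)$ is in bijection with $S$, and the pricing condition becomes $\card{S}/(k-1/2)>1$, that is, $\card{S}\ge k$ because $\card{S}$ is an integer. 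The weights are rational and of polynomial size, so this is a polynomial reduction.

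\emph{Validity of the dual vector.} The one delicate point is whether $(\gamma,\mu)$ must actually arise as an optimal dual solution of some restricted master $\setcoveringform(\hat{\STAB})$. If a referee insists on this, we choose $\hat{\STAB}$ to be the singletons $\{v\}$ together with no other columns. The restricted master then has optimal primal value $n$, and its dual is $\max\sum_i\gamma_i$ subject to $\gamma_i\le 1$. The dual optimum is therefore $\gamma=\mathbf{1}$, and the resulting pricing question asks whether there is a stable set of size at least $2$. That question is trivial, so this choice of $\hat{\STAB}$ gives no hardness.

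To get genuine hardness under this stricter reading, we would add padding columns or scale by the maximum stable set structure. However, the standard convention in the literature (Trick, Furini) is to treat the pricing problem as a weighted stable set problem with arbitrary nonnegative weights. We adopt that convention and note this point as the main obstacle. Under it, the reduction above, or equivalently the observation that pricing contains the Maximum Weight Stable Set Problem as the case $\PP=\QQ$ singletons, completes the proof.
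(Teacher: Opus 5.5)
Your proof is correct and follows essentially the paper's route. Both arguments reduce from stable set by taking singleton parts in $\PP$, setting $\mu=0$, and rescaling vertex weights so that the threshold $1$ matches the integer bound $k$. The paper starts from weighted stable set with $\QQ=\{V\}$ and $\gamma_v=w_v/(k-1)$, whereas your threshold $k-\tfrac12$ also avoids the division by zero at $k=1$. Like you, the paper treats the dual values as arbitrary nonnegative inputs, so your digression on whether they arise as actual optimal duals is not needed.
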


\begin{proof}
It is immediate that the pricing problem belongs to NP. We prove NP-hardness by reduction from the decision version of the Weighted Stable Set Problem (WSSP).

An instance $(G,w,k)$ of the WSSP consists of a graph $G=(V,E)$, a vector $w \in \mathbb{Z}_+^V$ of nonnegative integer vertex weights, and an integer $k \in \mathbb{Z}_+$. The problem asks whether $G$ contains a stable set of weight at least $k$.

Consider the pricing problem defined on the same graph $G$, with partitions $\PP = \{\{v\} : v \in V\}$ and $\QQ = \{V\}$, and dual variables defined by $\gamma_v = w_v/(k-1)$ for all $v \in V$, and $\mu_V = 0$.
Then, for any stable set $S \in \STAB(G)$, we have
$\bar{c}_S = 1 - \frac{w(S)}{k-1}$.
Since $w(S)$ and $k$ are integers, there exists a stable set with $w(S) \ge k$ if and only if there exists a stable set with $\bar{c}_S < 0$. This completes the reduction.
\end{proof}

%{\color{red} no pondria los parrafos en negro, solo algo mejorado de lo que esta en rojo, sin lo de formulacion edge, diria que usamos una formulacion por cubrimiento de arcos por cliques}
%The proof above shows that any instance of the Weighted Stable Set Problem can be formulated as an instance of the pricing problem. Conversely, if $\PP = \QQ = \{\{v\} : v \in V\}$, then any instance of the corresponding pricing problem (for any rational choice of dual variables) can be formulated as an instance of the Weighted Stable Set Problem. In general, however, the pricing problem cannot be reduced to the Weighted Stable Set Problem.

\begin{comment}
Given $\hat \STAB \subseteq \STAB(G)$, let $x^*$ be an optimal solution for the linear relaxation of $\setcoveringform(\hat \STAB)$ and $(\gamma^*,\mu^*) \in \R_+^{A \times B}$ be its corresponding dual optimal solution.
\end{comment}
Using binary variables $s_v$, for $v \in V$, such that $s_v = 1$ if and only if $v \in S$, and binary variables $w_j$, for $j \in [m]$, such that $w_j = 1$ if and only if $Q_j \cap S \neq \emptyset$, the pricing problem can be formulated as the following integer linear program:
\begin{comment}
    The pricing problem asks to find (if any) a stable set $S^* \in \STAB(G) \setminus \hat \STAB$ such that $\sum_{a \in A(S^*)} \gamma^*_a - \sum_{b \in B(S^*)} \mu^*_b > 1$.
It can be addressed by the following ILP formulation. 
For each $(a,b) \in V$, there is a binary decision variable $s_{ab}$ such that $s_{ab} = 1$ if and only if $(a,b)$ belongs to $S^*$. 
For each $b \in B$, there is a binary decision variable $w_b$ such that $w_b = 1$ if $b \in B(S^*)$.
\end{comment}
%\textcolor{red}{Tenemos que resolver una variante del problema de estable de peso maximo, donde el peso del estable no es la suma de los pesos de los vertices, sino que usamos tal funcion objetivo. La formulacion con restricciones EDGE se puede reforzar, teniendo en cuenta la estrucutura de nuestras instancias, de la siguiente forma.}
{\small
\begin{align}
(\pricingform) \quad \max & \sum_{i \in [n]} \gamma^*_i \sum_{v \in P_i} s_v - \sum_{j \in [m]} \mu^*_j w_j \label{PricingFormulation.Obj}\\[-4pt]
\textrm{s.t.} \nonumber \\[-4pt]
& \sum_{v \in P_i} s_v \leq 1, &  i \in [n], \label{PricingFormulation.A.clique} \\
& s_v + \sum_{\substack{u \in N(v) \cap P_k }} s_u \leq 1, &  v \in V,\ k \in [n] \setminus \{i(v)\}, \label{PricingFormulation.A.edge} \\
& s_v \leq w_{j(v)}, &  v \in V, \label{PricingFormulation.B}  \\
& s \in \{0,1\}^V,\ w \in \{0,1\}^{m}. \nonumber
\end{align}
}

The objective function~\eqref{PricingFormulation.Obj} ensures that an optimal solution $S^*$ maximizes the dual contribution and, equivalently, minimizes the reduced cost $\bar c_{S^*} = 1 - (\sum_{i \in \PP(S^*)} \gamma^*_i - \sum_{j \in \QQ(S^*)} \mu^*_j)$.

Constraints~\eqref{PricingFormulation.A.clique} and~\eqref{PricingFormulation.A.edge} correspond to clique inequalities associated with the clique $P_i$, for $i \in [n]$, and the clique $\{v\} \cup (N(v) \cap P_k)$, for $v \in V$ and $k \in [n] \setminus \{i(v)\}$, respectively. 
Observe that each edge of $G$ is covered by at least one of these cliques, which enforces that $S^*$ is a stable set.
Constraints~\eqref{PricingFormulation.B} ensure that $w_j$ is activated whenever $S^*$ contains at least one vertex in $Q_j$.

%These inequalities are strengthened by requiring activation whenever a vertex in the clique $\{v\} \cup (N(v) \cap P_k \cap Q_{j(v)})$ is selected, {\color{red}for $v \in V$ and $k \in [n] \setminus \{i(v)\}$}.

\begin{comment}
\textcolor{red}{Como es usual, no necesitamos una optima, sino cualquiera con costo reducido negativo.}
An early stop can be introduced when optimizing \pricingform, as soon as a feasible solution with an objective value greater than 1 is found.
\textcolor{red}{En realidad paramos cuando encontramos un estable de peso $>$ 1.1, ver si vale la pena decirlo.}
Even with this improvement, optimization can be too time-consuming.
For this reason, SE PROPONEN HEURISTICAS.
\textcolor{red}{esto ya se dice en el parrafo de abajo. elegir uno de los dos}
\end{comment}

\subsubsection{Pricing heuristics}

An instance of the pricing problem must be solved at every iteration of the column generation process, at each node of the branching tree. To reduce the associated computational effort, we employ three pricing algorithms. Only if none of them succeeds in finding a stable set with negative reduced cost, formulation DPCP-PRI is solved using a commercial ILP solver.

\paragraph{Heuristic pricing}

We propose a greedy heuristic that constructs a stable set by repeatedly selecting a vertex of maximum weight.
Unlike classical heuristics for the WSSP, where vertex weights are fixed, the weights in our pricing problem are adaptive, as they depend on previously selected vertices. 
Initially, each vertex $v$ is assigned weight $\gamma^*_{i(v)} - \mu^*_{j(v)}$. Upon selecting a vertex $v$, all its neighbors are removed, and the weight of each remaining vertex $u \in Q_{j(v)}$ is updated to $\gamma^*_{i(u)}$. %If the total weight of the resulting stable set exceeds $1$, a candidate column is generated.

This heuristic is further enhanced by a semi-greedy strategy based on a restricted candidate list \citep{Feo1995}, similar to the two-step heuristic previously proposed for the DPCP in Section \ref{sec:heuristics:2:step}. The resulting randomized variant is executed a fixed number of times, potentially generating multiple columns.

\paragraph{WSSP-based approach}

By ignoring the adaptive behavior of the vertex weights and using fixed weights instead, the pricing problem can be approximated by the WSSP. After a stable set is obtained, its value is recomputed using the original pricing objective function.

Given a stable set $S \in \STAB(G)$, let $w(S)$ denote its weight under the objective function~\eqref{PricingFormulation.Obj}. Two fixed-weight variants are considered:
\[
w_{\PP\QQ}(S) = \sum_{v \in S} \left(\gamma^*_{i(v)} - \mu^*_{j(v)}\right)
\quad \text{and} \quad
w_{\PP}(S) = \sum_{v \in S} \gamma^*_{i(v)}.
\]
Observe that
$$w_{\PP\QQ}(S) \leq w(S) \leq w_{\PP}(S).$$

In the first approach, each vertex $v$ is assigned weight $\gamma^*_{i(v)} - \mu^*_{j(v)}$, and vertices with negative weight are discarded. For any stable set $S$ such that $w_{\PP\QQ}(S) > 1$, it follows that $w(S) > 1$, and consequently a candidate column is generated. However, even when no such stable set exists, a stable set $S^*$ maximizing $w_{\PP\QQ}(S^*)$ may still be useful if it satisfies $w(S^*) > 1$.

Additionally, before evaluating the weight in the original pricing problem, the stable set can be greedily extended by adding vertices from any set $Q \in \QQ$ that already contains selected vertices, since they have nonnegative contribution to the objective function. 

In the second approach, each vertex $v$ is assigned weight $\gamma^*_{i(v)}$, and a stable set $S^*$ maximizing $w_{\PP}(S^*)$ is computed. If $w_{\PP}(S^*) \leq 1$, then $w(S) \leq 1$ for all stable sets $S \in \STAB(G)$, implying that no column with negative reduced cost exists and the column generation process can be safely terminated without solving the ILP formulation. Otherwise, the weight of $S^*$ is evaluated in the original problem, and if $w(S^*) > 1$, a candidate column is obtained.

\subsection{Initialization}\label{sec:branch-and-price-algo-init}

To start the column generation process, an initial set of columns $\hat \STAB \subseteq \STAB(G)$ is required such that the linear relaxation of $\setcoveringform(\hat \STAB)$ is feasible.
For feasible instances of \newproblem, the set of active columns in any feasible solution provides a valid choice for $\hat \STAB$.
However, as discussed in Section~\ref{sec:feasibility:DPCP}, deciding feasibility of a general instance of \newproblem\ is NP-complete.
Consequently, initializing general instances of \newproblem\ is more challenging than for instances arising from the VCP, PCP, or CFCP, for which feasible solutions can be easily constructed.

The initialization procedure consists of the following steps.
First, we attempt to compute a heuristic solution using the approaches described in Section~\ref{sec:heuristics}.
%If this fails, we attempt to decide feasibility via the reduction to the Maximum Stable Set Problem described in Proposition~\ref{Gtilde}.
%Although this problem is NP-hard as well, highly optimized algorithms and software are available that can solve many large instances to optimality within seconds.
%If a maximum stable set can be computed within a prescribed time limit, Proposition~\ref{Gtilde} is applied to certify either feasibility or infeasibility of the instance; in the former case, the stable set is then used to define a feasible solution.
%Otherwise, 
If the heuristics fail, 
%and the time limit for the Maximum Stable Set Problem is insufficient to decide feasibility, 
we resort to computing a basic feasible solution of $\setcoveringformLR$, the linear relaxation of \setcoveringform.
A common approach for initializing linear programs consists of solving auxiliary problems with artificial variables.
%We propose solving the following linear program, which includes a single dummy variable $t$ with a suitably chosen coefficient $M \in \Z_+$ in the objective function.
To this end, we solve the following parameterized linear program, which includes an additional variable $t$ with objective coefficient given by a parameter $M\in\mathbb Z_+$:
\begin{align}
(\setcoveringformLRM) \quad  &   \min \sum_{S \in \STAB(G)} x_S+ M t
\color{black}
\\
 & \sum_{S \in \STAB(G): P \cap S \neq \emptyset} x_S + t \geq 1, &  P \in \PP, \label{SCFormulation.A} \\
& \sum_{S \in \STAB(G): Q \cap S \neq \emptyset} x_S \leq 1, &  Q \in \QQ, \label{SCFormulation.B} \\
& x \in [0,1]^{\STAB(G)} \, , \, t \in [0,1]. \nonumber
\end{align}

%Based on the optimal value of $\setcoveringformLRM$ and the value of $t$ in an optimal solution, the following results hold for \setcoveringformLR.
The following remark provides sufficient conditions to infer feasibility of $\setcoveringformLR$:

\begin{remark}
\label{remark.M}
For any $M \in \Z_+$, let $(x^*, t^*)$ be an optimal solution of $\setcoveringformLRM$ with optimal value $z^*_{\mathrm{ext}}$.
\begin{enumerate}
    \item If $t^* = 0$, then $x^*$ is an optimal solution of $\setcoveringformLR$.
    
    \item Since $\min\{n,m\}$ is an upper bound on the optimal value of $\setcoveringformLR$, it follows that if $t^* > 0$ and $z^*_{\mathrm{ext}} > \min\{n,m\}$, then $\setcoveringformLR$ is infeasible.
\end{enumerate}
\end{remark}

The choice of parameter $M$ plays an important role in practice. The arguments used in the proof of Lemma~4.1 in \cite{Lucci2024} can be adapted to show that there exists a value $M_0 \in \mathbb{R}$ such that, for every $M \ge M_0$, an optimal solution of $\setcoveringformLRM$ satisfies one of the two conditions stated in Remark~\ref{remark.M}. Since excessively large values of $M$ may lead to numerical instabilities, in our implementation we adopt the same value proposed in that work.
%\textcolor{red}{con un raxzonamiento similar al usado en teorema tal de tal paper, se puede probar que Existe teoricamente un valor suficientemente grande de $M$ que garantiza que ...}
%Larger values of $M$ increase the likelihood that solving $\setcoveringformLRM$ yields one of the two outcomes described in Remark~\ref{remark.M}. 
%{\color{red} Por lo tanto elegimos M=1000 como en el paper de coloreo.}

If neither condition of Remark~\ref{remark.M} applies, that is, if $t^* > 0$ and $z^*_{\mathrm{ext}} \leq \min\{n,m\}$, we proceed with a standard two-phase initialization scheme.
In the first phase, we solve the linear program $\setcoveringformLRext$ 
obtained from $\setcoveringformLRM$ by replacing the objective function with the minimization of $t$.
Observe that $\setcoveringformLR$ is feasible if and only if $\setcoveringformLRext$ has optimal value zero.
In that case, the optimal solution of $\setcoveringformLRext$ is used to initialize $\setcoveringformLR$, which is solved in the second phase.

\subsection{Branching scheme}\label{subsec:branching}

The branching scheme determines how the solution space is partitioned whenever a fractional solution is obtained at a node of the B\&P tree. 
In the following, 
we extend to the DPCP a branching rule proposed by \cite{Furini2018} for the PCP. 
%This rule guarantees that every child node of the enumeration tree corresponds to another instance of the DPCP. Moreover, for instances of the PCP (resp.~CFCP) formulated as instances of the DPCP, the child nodes are also instances of the same problem.

%{\color{red} me parece que lo que hay que decir es que la extension aplicada a instancias del PCP coincide con la de Furini. Extendemos la regla esa, de manera que sobre instancias del DPCP correspondientes a PCP coincide con ella. Algo asi. Y nada mas. Que los hijos en general son instancias del DPCP es lo que se espera de una regla de branching en un B\&P}

%Moreover, when applied to PCP instances, this branching scheme coincides with that proposed in \cite{Furini2018}.

%Let 
%\[
%\PP^{\geq 2}=\{P_i\in\PP:%\card{P_i}\geq2\}
%\]
%be the family containing all parts of $\PP$ with cardinality larger than or equal to two.

First, observe that, if $\card{P}=1$ holds for all $P\in\PP$, then Proposition~\ref{prop.2.step.heuristic} implies that the DPCP instance can be transformed into a VCP instance. 
%The same situation arises in the branching scheme for the PCP. 
For PCP instances satisfying this condition, \cite{Furini2018} 
%the authors 
switch to the classical VCP branching scheme and continue the B\&P procedure. Instead, we solve the current node using the B\&P algorithm proposed in \cite{Held2012}. Furthermore, we use the current upper bound (i.e., the value of the best solution found so far) to terminate this secondary B\&P procedure as soon as its lower bound reaches or exceeds this value. 

Our branching rule is applied only if there is at least one part $P\in \PP$ with $\card{P} \geq 2$. It consists in 
selecting such a part $P$ with $|P| \geq 2$ and containing the largest number of vertices that appear in the fractional solution $x^*$, that is, maximizing 
%$\left|\left\{v\in P:
%\sum_{S\in\hat{\STAB}:v\in S}x^*_S>0
%\right\}\right|$ 
$
\left|P\cap (\cup_{S\in\hat{\STAB}: x^*_S>0}S)\right|
$. 
Ties are broken by preferring smaller parts, and then by the index of the smallest part.
After selecting a part $P$, we choose a vertex $v\in P$ maximizing its fractional assignment, i.e., $\sum_{S\in\hat{\STAB}:v\in S}x^*_S$.
The solution space is then partitioned into two subspaces: those double-partition colorings in which $v$ is colored and those in which $v$ remains uncolored.
Both cases lead to new DPCP instances, as described below.
\begin{enumerate}
\item If $v$ is colored, then all vertices in $R=P_{i(v)}\setminus\{v\}$ need no longer be considered, since the requirement of coloring the part $P_{i(v)}$ is already satisfied.
Therefore, they can be removed.
This yields an instance $(G^1,\PP^1,\QQ^1)$ where $G^1=G\setminus R$, $\PP^1=\{P^1_i:i\in[n]\}$ with $P^1_i=P_i$ for all $i\neq i(v)$ and $P^1_{i(v)}=\{v\}$, and $\QQ^1=\{Q_j\setminus R:j\in[m]\}$.
Observe that $P^1_{i(v)}$ is now a singleton, and Lemma~\ref{th:singletons} can be applied.

\item If $v$ is uncolored, then it can be removed, and another vertex in $P_{i(v)}$ must be colored instead.
This leads to an instance $(G^2,\PP^2,\QQ^2)$ where $G^2=G\setminus\{v\}$, $\PP^2=\{P^2_i:i\in[n]\}$ with $P^2_i=P_i$ for all $i\neq i(v)$ and $P^2_{i(v)}=P_{i(v)}\setminus\{v\}$, and $\QQ^2=\{Q^2_j:j\in[m]\}$ with $Q^2_j=Q_j$ for all $j\neq j(v)$ and $Q^2_{j(v)}=Q_{j(v)}\setminus\{v\}$.
\end{enumerate}

% The branching rule proposed by \cite{Trick1996} for 
% the VCP consists of selecting a pair of nonadjacent vertices $u$ and $v$, and partitioning the solution space into those colorings in which $u$ and $v$ receive different colors and those in which they are assigned the same color. Thus, every child node in their branch-and-price tree corresponds to another VCP instance. Since every instance of the VCP is feasible, all nodes of the enumeration tree are feasible.
%  Similarly, the branching scheme proposed by \cite{Furini2018} creates child nodes corresponding to PCP instances, which are therefore always feasible.

Since the cardinality of the selected part $P$ decreases in both child nodes, after applying the branching rule a finite number of times, the case $\card{P}=1$, for all $P\in\PP$ is eventually reached at the leaves of the enumeration tree.

For DPCP instances arising from PCP and CFCP,  each child node again corresponds to an instance of the PCP or the CFCP, respectively, which are known to be always feasible.
In contrast, this branching scheme may generate infeasible child nodes for general DPCP instances, even when the original instance is feasible, as illustrated in the following example.
\begin{example}\label{example:infeasible:branch}
Let $G=P_3$ be the path graph with vertices $v_1,v_2,v_3$, and let 
$\PP=\{\{v_1\},\{v_2,v_3\}\}$ and 
$\QQ=\{\{v_1,v_2\},\{v_3\}\}$.
Clearly, this instance is feasible, since $v_1$ and $v_3$ can be assigned different colors.
If we branch on $v_2$, then the resulting instance $(G^1,\PP^1,\QQ^1)$ is infeasible.
Similarly, branching on $v_3$ leads to an infeasible instance $(G^2,\PP^2,\QQ^2)$.
In both cases, vertices $v_1$ and $v_2$ are forced to receive the same color despite being adjacent, leading to a contradiction.
\end{example}

%The branching scheme presented above 
%is not complete, as it
%cannot be applied directly to nodes of the branch-and-price tree for which $\PP^{\geq 2}= \emptyset$, i.e., to DPCP instances in which $|P_i|=1$ holds for all $i\in[n]$.
%For such instances, a fractional optimal solution indicates which vertices are colored, but a vertex may be fractionally assigned to multiple colors. A similar situation arises in PCP, which is why \cite{Furini2018} switch to the classical VCP branching scheme and continue the branch-and-price procedure.
%Instead, we apply Proposition~\ref{prop.2.step.heuristic} to transform these instances into VCP instances and solve the current node using the branch-and-price algorithm proposed in \cite{Held2012}.
%Furthermore, we use the current upper bound (i.e., the value of the best solution found so far) to terminate this secondary branch-and-price procedure as soon as its lower bound reaches or exceeds this value.

\section{Experimental setup and Branch-and-Price configuration}
\label{sec:B&Pconfigurations}

In this section, we describe the experimental setup and report preliminary experiments conducted to configure the B\&P algorithm. 
The computational experiments were carried out on a computer equipped with Ubuntu 22.04.5 LTS, using a single thread of an Intel Core i5-3230M CPU at 2.60 GHz and 6 GB of RAM.  
The repository, including the source code, instances, results, and analysis, is available at \url{https://github.com/maurolucci/dpcp}.

\subsection{Instances}

We generated random DPCP instances based on the number of vertices $|V|$, the edge probability $p$, and the cardinalities  $n = |\PP|$ and $m = |\QQ|$. 
First, an Erdős–Rényi graph with parameters $|V|$ and $p$ is generated. 
Then, each vertex is assigned uniformly at random to a part of $\PP$ and of $\QQ$, while preventing  empty parts in the partitions.

For CFCP, three datasets exhibiting different structural properties are considered. 
First, we generate hypergraphs arising from frequency assignment problems with circular coverage areas. 
The generator takes as input the number of antennas ($c$) and their coverage radius ($r$). 
The antennas are placed uniformly at random within a $100 \times 100$ square region. 
The resulting hypergraph has one vertex for each antenna, while each hyperedge represents the set of antennas covering a connected region of the coverage area (see Figure \ref{fig:ex-three-antennas}). 
To identify these regions, we discretize the plane using a regular grid with a spacing of 0.1 between adjacent points, and compute which antennas are reachable from each point of the grid.
% To this end, the generation process takes as parameters the number of antennas ($c$) and the coverage radius of each antenna ($r$).
% The antennas are placed uniformly at random in a square of side length 100.
% The antennas are distributed uniformly at random within a square region of dimensions $100 \times 100$. Let $P = \{p_1, \dots, p_c\} \subset [0, 100]^2$ denote the coordinate positions of the antennas. The coverage area of antenna $i \in \{1, \dots, c\}$ is modeled as a closed disk $D_i = D(p_i, r)$ of radius $r$. 
% The associated hypergraph $\mathcal{H} = (\mathcal{V}, \mathcal{E})$ is constructed by setting the vertex set to represent the antennas, so $\mathcal{V} = \{1, \dots, c\}$. The family of hyperedges $\mathcal{E}$ represents the set of all possible reachability profiles for a client located in the region. Geometrically, the arrangement of the disks $D_1, \dots, D_c$ partitions the plane into a finite set of connected regions (cells). Consequently, a non-empty subset of vertices $E \subseteq \mathcal{V}$ forms a hyperedge in $\mathcal{E}$ if and only if there exists a point $x \in [0, 100]^2$ covered exclusively by the antennas in $E$.
 The other two sets of CFCP instances correspond to
hypergraphs induced by the open and closed neighborhoods of the vertices of some DIMACS instances (\url{https://mat.tepper.cmu.edu/COLOR/instances.html}). 

As shown in \ref{Prop.Equiv.Formulations}, for instances arising from the PCP and the LCP, formulation \setcoveringform\ reduces to the set covering formulations proposed in \cite{Furini2018,Lucci2024}.
Consequently, under the same algorithmic choices and computational environment, experiments on PCP and LCP instances would provide limited additional insight.
%Accordingly, we focus on general DPCP instances  together with CFCP instances, for which no computational results have been reported, to the best of our knowledge. 

In summary, the performance of the algorithms will be computationally evaluated on the following sets of instances:

\begin{description}
    \item[DPCP-ER-1] contains three random DPCP instances for each combination of the following parameter values: $|V| = 150$, $p \in \{0.25,0.5,0.75\}$, $n \in \{0.2|V|, 0.3|V|\}$, and $m \in \{0.2|V|, 0.3|V|\}$. This results in a total of 36 instances.
    \item[DPCP-ER-2] is analogous to DPCP-ER-1, but with $|V| = 110$.
    \item[CFCP-FA] contains five random CFCP instances of frequency assignment for each combination of the following parameter values: $(c,r) \in \{(9,50),(12,35),(18,20)\}$. This results in a total of 15 instances.
    \item[CFCP-ON] contains the CFCP instances associated with the open neighborhoods of the DIMACS instances. In this dataset, isolated vertices are first removed from the DIMACS instances to guarantee feasibility. Only instances whose conflict graphs have at most 1000 vertices are considered, resulting in a total of 9 instances.
    \item[CFCP-CN] is analogous to CFCP-ON, but with closed neighborhoods.
\end{description}

% \item By Remark~\ref{clique-estable}, edges are added to $G$ until each part $P_i \in \PP$ is a clique (this step is executed only at the root node).
% \item Check whether $|\PP|=1$. In this case, the solution is trivial by Remark~\ref{nm_igual1}.
% \item For each $i \in [n]$, if $P_i$ is a singleton, then the construction described in Lemma~\ref{Pcardinal1} is applied to reduce the size of the graph. This rule is applied iteratively until no singleton parts remain in $\PP$. If any part of $\PP$ becomes empty, the instance is infeasible.
% \item \textcolor{red}{Colapso de vertices}
% \item Isolated vertices are removed from $G$ and stored in a list. When a solution is found, these vertices are colored in a postprocessing step, as this does not affect the number of required colors, by Corollary~\ref{singleton}.
% \end{enumerate}

The next two sections present preliminary experiments on the DPCP-ER-1 dataset aimed at fine-tuning the B\&P algorithm. 
%Subsequently, we evaluate the performance of the resulting algorithm on the remaining datasets.

\subsection{Evaluation and selection of DPCP heuristics}

We begin by evaluating the performance of the DPCP heuristics presented in Section \ref{sec:heuristics}: the one-step greedy heuristic, and the two-step greedy heuristic with the three vertex-selection criteria \textsc{Deg}, \textsc{QDeg}, and \textsc{SQDeg}. 

Results are reported in Table \ref{table.results.heur}, where each row summarizes the performance of a heuristic over all instances in the dataset, including the number of instances for which a solution was found, the average objective value, and the average execution time. 
Column ``$\geq$best'' reports the percentage of instances in which the heuristic finds the best solution among all heuristics. Since several heuristics may find the same best solution, column ``best'' breaks ties using the execution time, so exactly one heuristic is counted for each instance.
When a heuristic fails to find a solution, we assign as objective value the minimum cardinality of the two partitions, providing a valid upper bound for feasible instances and ensuring a fair comparison in terms of the average objective value.

\begin{table}
\centering
\small
\begin{tabular}{lrrrrr}
\toprule
heuristic & \#solved & value & time & $\geq$best & best \\
& & & (ms) & (\%) & (\%) \\
\midrule
greedy 1-step & 16 & 22.8 & 3.51 & 33 & 11 \\
greedy 2-step (\textsc{Deg}) & 33 & 10.5 & 1.04 & 36 & 36 \\
greedy 2-step (\textsc{QDeg}) & 31 & 12.0 & 1.29 & 39 & 8 \\
greedy 2-step (\textsc{SQDeg}) & 32 & 10.8 & 1.57 & 67 & 36 \\
\bottomrule
\end{tabular}
\caption{Average results of the greedy heuristics for the DPCP-ER-1 dataset.}
\label{table.results.heur}
\end{table}

The two-step greedy heuristics succeed in finding solutions for nearly all instances. 
In contrast, the one-step greedy heuristic fails to find solutions for roughly half of the instances and takes more than three times as long.
Among the vertex-selection criteria, \textsc{SQDeg} achieves the best objective value in 67\% of the instances, nearly twice as often as any other criterion.
However, \textsc{SQDeg} is 34\% slower than \textsc{Deg} and 18\% slower than \textsc{QDeg}. When execution time is used to break ties, \textsc{Deg} emerges as a competitive alternative, achieving the best performance in 36\% of the instances, tied with \textsc{SQDeg}.
Overall, these results demonstrate that the two-step approach consistently outperforms the one-step approach in terms of both solution quality and computational efficiency.

These results motivated the evaluation of semigreedy variants of the two-step heuristics, using $\alpha = 0.15$ to define the restricted candidate list.
We analyze the average best objective value and execution time as a function of the number of iterations, as shown in Figures \ref{fig.results.semigreedy.value} and \ref{fig.results.semigreedy.time}.

\begin{figure}

\begin{subfigure}{0.49\textwidth}
\centering
\includegraphics[width=\textwidth]{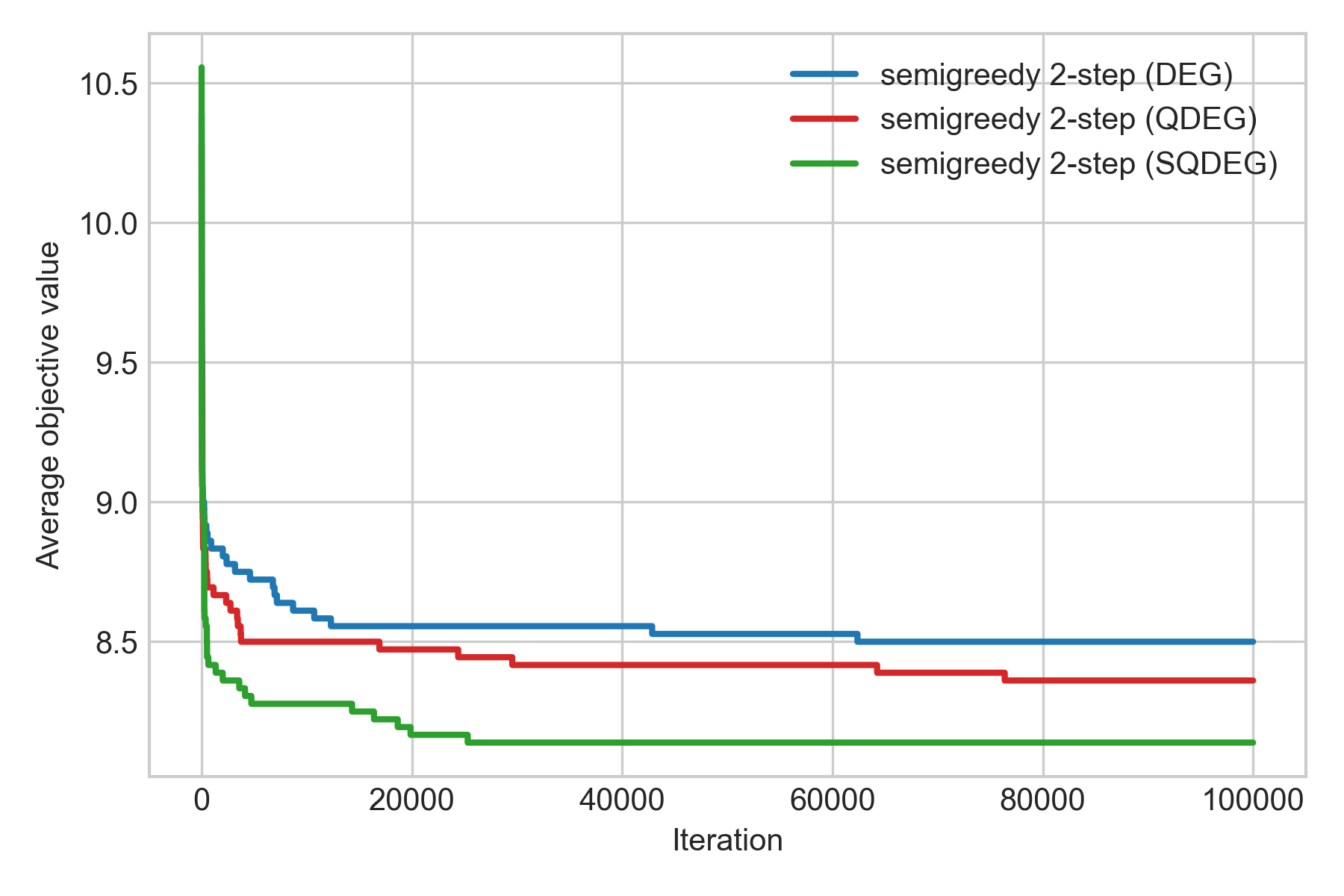}
\caption{Average best objective value}
\label{fig.results.semigreedy.value}
\end{subfigure}
\begin{subfigure}{0.49\textwidth}
\centering
\includegraphics[width=\textwidth]{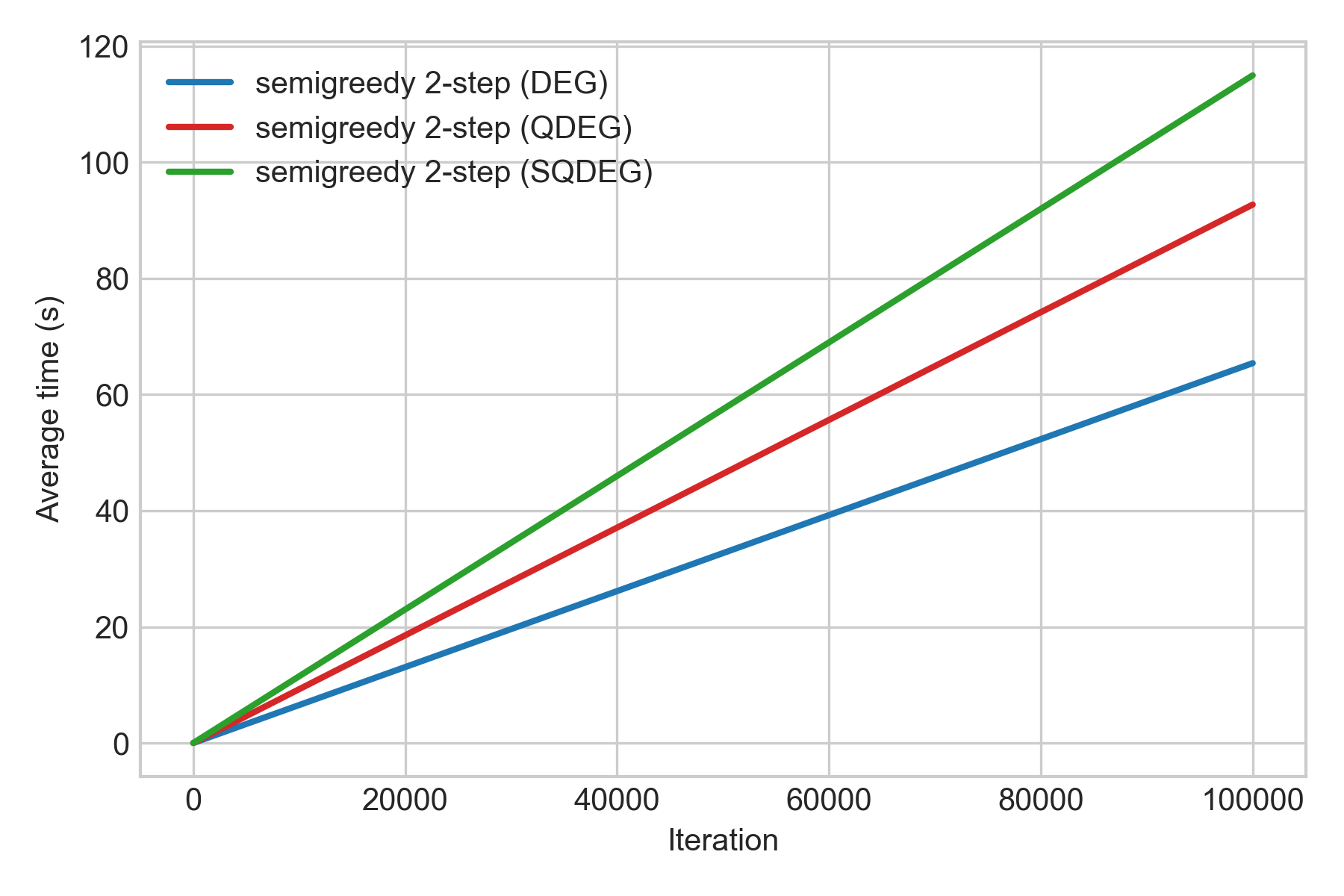}
\caption{Average cumulative execution time}
\label{fig.results.semigreedy.time}
\end{subfigure}

\caption{Results of the semigreedy heuristics for the DPCP-ER-1 dataset.}
\end{figure}

Introducing randomness into the two-step heuristics quickly yields improvements in solution quality of more than 20\% compared with their greedy counterparts.
Among the three criteria, \textsc{SQDeg} consistently provides the best solutions. 
The improvement in solution quality becomes progressively smaller as the number of iterations increases, whereas the running time grows almost linearly with the number of iterations.
For \textsc{SQDeg}, all the improvement is obtained within the first 30,000 iterations.
All heuristics find solutions for the 34 feasible instances of the dataset within the first 300 iterations, while the remaining two instances are correctly identified as infeasible during preprocessing.

Based on these results, we adopt the two-step approach with \textsc{SQDeg}. The greedy version of the heuristic is used to quickly initialize the linear relaxation at every node of the B\&P tree, whereas the semi-greedy version is executed only once at the root node to obtain a high-quality initial upper bound that promotes earlier pruning of the search tree.
The latter runs for 40,000 iterations or 1 minute, whichever comes first.

\subsection{Evaluation and selection of pricing algorithms}

As a second experiment, we evaluate the pricing algorithms presented in Section \ref{sec:branch-and-price-algo-pricing}. 
To this end, we solve the linear relaxation at the root node of each instance, with a time limit of 15 minutes.
We generate the initial columns as described in Section~\ref{sec:branch-and-price-algo-init}.
More specifically, we use a greedy solution obtained by the two-step greedy heuristic using the \textsc{SQDeg} criterion. 
If the heuristic fails to produce a feasible solution, a dummy variable is used instead, with objective coefficient $M=1000$, as suggested in \cite{Lucci2024}.

Our baseline pricing algorithm consists of solving \pricingform\ directly with an ILP solver.
We incorporate an early-stopping mechanism that terminates the search as soon as a stable set of weight at least 1.1 is found.
This avoids the additional effort of proving optimality once a sufficiently attractive column has been identified. 
Starting from this baseline, the remaining pricing algorithms are incorporated incrementally to evaluate the contribution of each enhancement.

First, we enable heuristic pricing by executing the randomized heuristic until either 20 candidate stable sets are found or 100 iterations are reached, with $\alpha=0.2$ for the restricted candidate list. 
If no columns are found, we resort to exact pricing.

Subsequently, the two WSSP-based pricing algorithms are activated, separately and jointly. These algorithms are applied after heuristic pricing fails and before exact pricing is invoked. 
Both rely on the exact algorithm of \cite{Held2012} for WSSP (available at \url{https://github.com/heldstephan/exactcolors}), which combines a greedy heuristic with combinatorial enumeration.
We refer to these pricing algorithms as WSSP$_{\PP}$ and WSSP$_{\PP\QQ}$, respectively, following the notation of their corresponding weight functions.
%For WSSP$_{\PP\QQ}$ pricing, we additionally exploit two features of the implementation: early termination and returning multiple stable sets $S$ with $w_{\PP\QQ}(S) > 1$ in a single invocation. 

In Table \ref{table.results.column.generation}, each row summarizes the average performance of a pricing configuration over all instances in the dataset. 
The first column indicates the configuration, while the second reports the number of linear relaxations solved within the time limit. 
The remaining columns report the average total pricing time and the average total number of generated columns, both broken down by pricing algorithm.
The enabled pricing algorithms for each configuration (i)–(v) can be identified directly from the table, where entries marked ``--'' indicate disabled algorithms.

\begin{table}[]
\centering
\small
\setlength{\tabcolsep}{5pt}
\begin{tabular}{ccrccccccccc}
\toprule
config. & \#solved & \multicolumn{5}{c}{pricing time (s)} & \multicolumn{5}{c}{\#added cols} \\
\cmidrule(lr){3-7} \cmidrule(lr){8-12}
& & total & heur & WSSP$_{\PP}$ & WSSP$_{\PP\QQ}$ & exact & total & heur & WSSP$_{\PP}$ & WSSP$_{\PP\QQ}$ & exact \\
\midrule
i & 22 & 603.1 & -- & -- & -- & 603.1 & 315 & -- & -- & -- & 315 \\
ii & 28 & 449.2 & 1.2 & -- & -- & 448.0 & 551 & 512 & -- & -- & 38 \\
iii & 32 & 279.3 & 1.3 & 1.7 & -- & 276.3 & 570 & 500 & 47 & -- & 21 \\
iv & 29 & 418.4 & 1.2 & -- & 2.7 & 414.5 & 554 & 512 & -- & 6.6 & 35 \\
v & 32 & 259.3 & 1.3 & 1.6 & 1.3 & 255.1 & 571 & 500 & 48 & 1.5 & 20 \\
\bottomrule
\end{tabular}
\caption{Average results of the pricing algorithms on the DPCP-ER-1 linear relaxations.}
\label{table.results.column.generation}
\end{table}

All pricing algorithms improve the baseline, with heuristic pricing generating most of the added columns and reducing those generated by exact pricing from 315 to fewer than 40.
Nevertheless, most of the pricing time is still spent in exact pricing, suggesting that further improvements should focus on this component.
Overall, combining heuristic pricing with both WSSP-based pricing algorithms yields the best performance, reducing the average pricing time by 57\% (from 603.1 s to 259.3 s) while increasing the number of solved instances from 22 to 32. 
Thus, configuration (v) is adopted throughout the B\&P algorithm.

\medskip

Preliminary tests suggested the convenience of keeping a private column pool at each node of the B\&P tree, which 
is scanned before any pricing algorithm is invoked. 
At the root node, the pool is initialized with the stable sets generated during the execution of the semi-greedy two-step heuristic. 
At each non-root node, the set of columns generated during initialization is augmented with the valid columns having positive value in the optimal solution of the parent node.
The remaining inherited columns from the parent node are stored in the node's private pool. 
To limit memory consumption, each pool stores at most 1000 columns.

In what follows, the B\&P tree is explored using a best-bound strategy, selecting for branching the node with the lowest linear relaxation value.
Each node of the tree is preprocessed by  repeatedly applying Lemmas \ref{th:singletons} and \ref{Isolated} until no further reduction is possible. This reduces the instance size and enables early infeasibility detection.
%Once a solution has been found, isolated vertices, if any, are colored in a postprocessing step, since this does not affect the number of required colors (see Lemma \ref{Isolated}).

\section{Computational evaluation of DPCP solvers} \label{sec:computationalresults}

In this section, we evaluate two approaches for solving the DPCP: the B\&P algorithm, using the settings established in the previous section, and the direct solution of the compact ILP formulation proposed in \cite{LucciEtAl2025}.
For the latter, we use Gurobi v12.0.3 with its default parameters. 
The evaluation is conducted on  DPCP-ER-2 dataset and the three datasets containing CFCP instances, with a time limit of one hour per instance.
To ensure a fair comparison, both approaches start from the same preprocessed instance and are initialized with the same solution generated by the two-step semi-greedy heuristic, which serves as an initial incumbent for the B\&P algorithm and as a warm start for the compact formulation.

\subsection{Performance on random DPCP instances}

Table \ref{table.results.byp} presents the results for the DPCP-ER-2 dataset, where each row reports the average performance over instances with the same edge probability $p$.
%whereas in Table \ref{table.results.byp.nm}, the averages are computed over instances with the same cardinalities $n = |\PP|$ and $m = |\QQ|$.
The columns report the solver (B\&P or Gurobi), the total number of instances solved to optimality (``opt''), proven infeasible (``infeas''), or terminated due to the time limit (``tilim''), the average execution time (excluding the time spent by the initial execution of the DPCP heuristic), and the absolute optimality gap.
The latter is averaged over all feasible instances, assigning a gap of 0 to instances solved to optimality.

\begin{table}

% \begin{subtable}{0.49\textwidth}
\centering
\small
\setlength{\tabcolsep}{5pt}
\begin{tabular}{rlrrrrr}
\toprule
$p$ & solver & \multicolumn{3}{c}{\#status} & time & abs.  \\
\cmidrule(lr){3-5}
 &  & opt & infeas & tilim & (s) & gap \\
\midrule
\multirow{2}{*}{0.25} & b\&p & 7 & 1 & 4 & 1281 & 0.4 \\
 & gurobi & 11 & 1 & 0 & 8 & 0.0 \\
\midrule
\multirow{2}{*}{0.50} & b\&p & 8 & 2 & 2 & 1275 & 0.2 \\
& gurobi & 5 & 2 & 5 & 1580 & 0.6 \\
\midrule
\multirow{2}{*}{0.75} & b\&p & 8 & 2 & 2 & 763 & 0.2 \\
& gurobi & 1 & 2 & 9 & 2867 & 2.7 \\
\bottomrule
\end{tabular}
% \caption{By edge probability.}
% \label{table.results.byp.p}
% \end{subtable}
% \begin{subtable}{0.49\textwidth}
% \centering
% \small
% \setlength{\tabcolsep}{5pt}
% \begin{tabular}{rrlrrrrr}
% \toprule
% $n$ & $m$ & solver & \multicolumn{3}{c}{\#status} & time & abs. \\
% \cmidrule(lr){4-6}
%  &  &  & opt & infeas & tilim & (s) & gap \\
% \midrule
% \multirow{4}{*}{22} & \multirow{2}{*}{22} & b\&p & 7 & 2 & 0 & 287 & 0.0 \\
%  &  & gurobi & 5 & 2 & 2 & 874 & 0.4 \\
% \cmidrule(l){2-8}
%  & \multirow{2}{*}{33} & b\&p & 9 & 0 & 0 & 314 & 0.0 \\
%  & & gurobi & 7 & 0 & 2 & 1056 & 0.2 \\
%  \midrule
% \multirow{4}{*}{33} & \multirow{2}{*}{22} & b\&p & 3 & 3 & 3 & 1475 & 0.5 \\
%  &  & gurobi & 2 & 3 & 4 & 1609 & 2.2 \\
% \cmidrule(l){2-8}
%  & \multirow{2}{*}{33} & b\&p & 4 & 0 & 5 & 2349 & 0.6 \\
%  &  & gurobi & 3 & 0 & 6 & 2401 & 1.7 \\
% \bottomrule
% \end{tabular}
% \caption{By partition size.}
% \label{table.results.byp.nm}
% \end{subtable}

\caption{Average results by edge probability on the DPCP-ER-2 dataset.}
\label{table.results.byp}
\end{table}

As shown in the table, the performance is strongly affected by the edge probability $p$. 
While B\&P improves as $p$ increases, the compact formulation exhibits the opposite behavior. 
In particular, B\&P clearly outperforms the compact formulation on high-density instances, proving optimality for 8 instances compared with only 1 when $p = 0.75$. 
Conversely, the compact formulation performs better on low-density instances, solving 11 instances to optimality compared with 7 for B\&P when $p = 0.25$, while requiring only 8 seconds on average versus 1281 seconds for B\&P. 
For medium-density instances ($p = 0.5$), B\&P has a slight advantage over the compact formulation, proving optimality for 8 instances against 5.

Additionally, although not shown in the table, all five infeasible instances in this dataset have $m=0.2|V|$, the smallest value considered, suggesting a higher likelihood of generating infeasible instances for smaller values of $m$. In all cases, infeasibility was detected in the preprocessing phase.
However, as mentioned in Example \ref{example:infeasible:branch}, infeasible nodes may still arise during the B\&P search on feasible instances.
In practice, this occurs very rarely: across all instances, only 334 of the 8,888 generated nodes (3.76\%) required dummy initialization, and only 3 were proven to have an infeasible relaxation (0.03\% of all nodes).
Therefore, additional infeasibility detection procedures are not a priority for this dataset.

% When grouping instances by the partitions size, B\&P solves one to three more instances to optimality than the compact formulation in every $(n,m)$ category, while also achieving lower average running times.
% Both approaches show a degradation in performance as the values of $n$ and $m$ increase.

To further analyze performance, Table \ref{table.results.byp.inst} presents the instance-by-instance results, excluding infeasible instances.
The first column contains the instance name, which encodes the number of vertices $|V|$, the edge probability $p$, and the cardinalities $n$ and $m$ of $\mathcal{P}$ and $\mathcal{Q}$, respectively, and the instance seed (e.g., \texttt{r\_V110\_p0.25\_n22\_m22\_i0}).
The column ``hval'' reports the value of the initial solution obtained by the DPCP heuristic.
For each solver, we report the execution time, the number of generated nodes, and the final lower and upper bounds.
For B\&P, we additionally report the value of the root linear relaxation (``rval''), as well as the pricing time and the number of generated columns, including columns retrieved from the pool. These pricing statistics are reported separately for the root node and averaged over the remaining nodes.
We use boldface to indicate the best result between the two algorithms, with execution time as the primary criterion. In case of a tie at the time limit, the upper bound is used as the secondary criterion, followed by the lower bound.

\begin{sidewaystable}
\centering
\small
\setlength{\tabcolsep}{4pt}
\begin{tabular}{lrlrrrrllrrlrlr}
\toprule
instance & hval & \multicolumn{9}{c}{b\&p} & \multicolumn{4}{c}{gurobi} \\
\cmidrule(lr){3-11} \cmidrule(lr){12-15}
 &  & time (s) & nodes & rval & lb & ub & \multicolumn{2}{c}{pricing time (s)} & \multicolumn{2}{c}{\#added cols} & time (s) & nodes & lb & ub \\
\cmidrule(lr){8-9}\cmidrule(lr){10-11}
 &  &  &  &  &  &  & root & non-root & root & non-root &  &  &  &  \\
\midrule
\texttt{r\_V110\_p0.25\_n22\_m22\_i0} & 2 & 6.6 & 1 & 1.6 & 2.0 & 2 & 6.6 & -- & 420 & -- & $<$\textbf{0.1} & 0 & 2.0 & 2 \\
\texttt{r\_V110\_p0.25\_n22\_m22\_i1} & 2 & 34.9 & 1 & 1.5 & 2.0 & 2 & 34.9 & -- & 474 & -- & \textbf{0.2} & 1 & 2.0 & 2 \\
\texttt{r\_V110\_p0.25\_n22\_m22\_i2} & 3 & 833.1 & 193 & 1.6 & 2.0 & 2 & 12.9 & 4.3 & 505 & 100 & \textbf{0.6} & 1 & 2.0 & 2 \\
\texttt{r\_V110\_p0.25\_n22\_m33\_i0} & 2 & 9.2 & 1 & 1.6 & 2.0 & 2 & 9.1 & -- & 460 & -- & $<$\textbf{0.1} & 0 & 2.0 & 2 \\
\texttt{r\_V110\_p0.25\_n22\_m33\_i1} & 2 & 2.0 & 1 & 1.6 & 2.0 & 2 & 1.9 & -- & 475 & -- & $<$\textbf{0.1} & 0 & 2.0 & 2 \\
\texttt{r\_V110\_p0.25\_n22\_m33\_i2} & 2 & 9.5 & 1 & 1.6 & 2.0 & 2 & 9.5 & -- & 455 & -- & \textbf{0.1} & 1 & 2.0 & 2 \\
\texttt{r\_V110\_p0.25\_n33\_m22\_i1} & 4 & \textit{tilim} & 234 & 2.6 & 3.0 & 4 & 39.3 & 15.2 & 802 & 166 & \textbf{10.4} & 667 & 3.0 & 3 \\
\texttt{r\_V110\_p0.25\_n33\_m22\_i2} & 4 & \textit{tilim} & 119 & 2.6 & 3.0 & 4 & 78.9 & 29.7 & 844 & 168 & \textbf{75.0} & 5804 & 4.0 & 4 \\
\texttt{r\_V110\_p0.25\_n33\_m33\_i0} & 4 & \textit{tilim} & 363 & 2.6 & 3.0 & 4 & 31.1 & 9.8 & 733 & 148 & \textbf{2.8} & 569 & 3.0 & 3 \\
\texttt{r\_V110\_p0.25\_n33\_m33\_i1} & 3 & 77.8 & 1 & 2.4 & 3.0 & 3 & 77.7 & -- & 776 & -- & \textbf{1.7} & 4 & 3.0 & 3 \\
\texttt{r\_V110\_p0.25\_n33\_m33\_i2} & 4 & \textit{tilim} & 304 & 2.5 & 3.0 & 4 & 40.4 & 11.7 & 686 & 144 & \textbf{9.9} & 652 & 3.0 & 3 \\
\midrule
\texttt{r\_V110\_p0.5\_n22\_m22\_i1} & 4 & 765.6 & 97 & 2.8 & 4.0 & 4 & 21.5 & 7.7 & 384 & 78 & \textbf{522.9} & 39595 & 4.0 & 4 \\
\texttt{r\_V110\_p0.5\_n22\_m22\_i2} & 4 & 503.8 & 41 & 2.9 & 4.0 & 4 & 25.6 & 11.9 & 373 & 77 & \textbf{145.0} & 4964 & 4.0 & 4 \\
\texttt{r\_V110\_p0.5\_n22\_m33\_i0} & 4 & 91.5 & 35 & 2.9 & 4.0 & 4 & 9.5 & 2.4 & 346 & 62 & \textbf{21.3} & 3899 & 4.0 & 4 \\
\texttt{r\_V110\_p0.5\_n22\_m33\_i1} & 4 & 899.9 & 359 & 2.8 & 4.0 & 4 & 7.7 & 2.5 & 389 & 63 & \textbf{245.8} & 18697 & 4.0 & 4 \\
\texttt{r\_V110\_p0.5\_n22\_m33\_i2} & 4 & 1216.1 & 419 & 2.7 & 3.0 & 3 & 6.3 & 2.9 & 364 & 64 & \textbf{35.9} & 1280 & 3.0 & 3 \\
\texttt{r\_V110\_p0.5\_n33\_m22\_i1} & 7 & \textbf{2114.4} & 349 & 5.2 & 6.0 & 6 & 27.9 & 6.0 & 468 & 98 & \textit{tilim} & 38099 & 5.0 & 6 \\
\texttt{r\_V110\_p0.5\_n33\_m22\_i2} & 7 & \textit{tilim} & 259 & 5.2 & 6.0 & 7 & 61.6 & 13.6 & 491 & 97 & \textit{tilim} & 73635 & 4.0 & \textbf{6} \\
\texttt{r\_V110\_p0.5\_n33\_m33\_i0} & 6 & \textbf{1326.7} & 243 & 4.7 & 6.0 & 6 & 12.1 & 5.4 & 497 & 88 & \textit{tilim} & 108460 & 5.0 & 6 \\
\texttt{r\_V110\_p0.5\_n33\_m33\_i1} & 6 & \textit{tilim} & 466 & 4.6 & 5.0 & 6 & 33.1 & 7.6 & 460 & 84 & \textit{tilim} & 100742 & 5.0 & 6 \\
\texttt{r\_V110\_p0.5\_n33\_m33\_i2} & 6 & \textbf{1190.7} & 217 & 4.7 & 6.0 & 6 & 10.1 & 5.5 & 510 & 92 & \textit{tilim} & 112387 & 5.0 & 6 \\
\midrule
\texttt{r\_V110\_p0.75\_n22\_m22\_i1} & 6 & \textbf{155.9} & 47 & 4.8 & 6.0 & 6 & 6.8 & 3.2 & 235 & 46 & \textit{tilim} & 120435 & 5.0 & 6 \\
\texttt{r\_V110\_p0.75\_n22\_m22\_i2} & 6 & \textbf{288.2} & 57 & 4.8 & 6.0 & 6 & 16.6 & 4.8 & 287 & 49 & \textit{tilim} & 103374 & 4.0 & 6 \\
\texttt{r\_V110\_p0.75\_n22\_m33\_i0} & 6 & \textbf{236.6} & 145 & 4.7 & 6.0 & 6 & 6.9 & 1.6 & 259 & 33 & \textit{tilim} & 258464 & 5.0 & 6 \\
\texttt{r\_V110\_p0.75\_n22\_m33\_i1} & 6 & \textbf{145.1} & 69 & 4.6 & 5.0 & 5 & 4.4 & 2.1 & 264 & 38 & 2005.0 & 26608 & 5.0 & 5 \\
\texttt{r\_V110\_p0.75\_n22\_m33\_i2} & 6 & \textbf{219.5} & 115 & 4.6 & 5.0 & 5 & 5.7 & 1.9 & 281 & 36 & \textit{tilim} & 116335 & 4.0 & 5 \\
\texttt{r\_V110\_p0.75\_n33\_m22\_i1} & -- & \textbf{18.3} & 15 & 10.8 & 12.0 & 12 & 7.9 & 0.7 & 322 & 75 & \textit{tilim} & 1855 & 8.0 & 12 \\
\texttt{r\_V110\_p0.75\_n33\_m22\_i2} & 12 & \textbf{346.4} & 71 & 9.7 & 11.0 & 11 & 17.5 & 4.7 & 421 & 66 & \textit{tilim} & 10884 & 6.0 & 12 \\
\texttt{r\_V110\_p0.75\_n33\_m33\_i0} & 10 & \textit{tilim} & 2465 & 8.1 & \textbf{9.0} & 10 & 3.2 & 1.4 & 311 & 46 & \textit{tilim} & 18631 & 6.0 & 10 \\
\texttt{r\_V110\_p0.75\_n33\_m33\_i1} & 10 & \textit{tilim} & 1918 & 8.0 & \textbf{9.0} & 10 & 13.2 & 1.8 & 375 & 50 & \textit{tilim} & 7451 & 6.0 & 10 \\
\texttt{r\_V110\_p0.75\_n33\_m33\_i2} & 10 & \textbf{546.1} & 275 & 8.2 & 9.0 & 9 & 10.7 & 1.9 & 345 & 57 & \textit{tilim} & 7406 & 6.0 & 10 \\
\bottomrule
\end{tabular}
\caption{Per-instance results on the DPCP-ER-2 dataset.}
\label{table.results.byp.inst}
\end{sidewaystable}

The individual results confirm the trends previously observed for low- and high-density graphs, and provide further insight into medium-density graphs, where the relative performance of the two approaches depends on $n$.
When $\mathcal{P}$ has fewer parts, the compact formulation is consistently faster than B\&P, whereas when $\mathcal{P}$ has more parts, B\&P solves three instances to optimality compared with none for the compact formulation.

Moreover, these results further support the effectiveness of the DPCP heuristic on random instances  and highlight the benefits of the proposed column management strategy over column pooling and inheritance. Specifically, the initial execution of the DPCP heuristic finds solutions within one unit of the optimum for 30 of the 31 instances, and finds optimal solutions for 16 of them.
Furthermore, on average, pricing at non-root nodes requires 68.6\% less time and generates 81.5\% fewer columns than pricing at the root node.

\subsection{Performance on CFCP instances}

In this section, we evaluate the algorithms on CFCP instances with different structural characteristics, using the CFCP-FA, CFCP-ON, and CFCP-CN datasets, to determine whether additional structural factors, beyond those identified on random DPCP instances, influence algorithmic performance.
Since these instances involve substantially larger conflict graphs, we reduce the limit on successive pricing heuristic calls from 100 to 10. 
In addition, the time limit is set to 15 minutes for CFCP-FA instances to avoid memory issues, while remaining at 1 hour for the other datasets.

Table \ref{table.results.freq} reports the results for each frequency assignment instance in the CFCP-FA dataset. The first column lists the instance name, which encodes the number of antennas $c$, the radius $r$, and the instance seed (e.g., \texttt{freq\_c18\_r20\_i0}).
The following columns describe the associated conflict graph by reporting its number of vertices $|V|$, density $p$, the cardinalities $n$ and $m$ of $\PP$ and $\QQ$, and the number of colors in the initial heuristic solution (``hval''). 
The remaining columns report, for each solver, the execution time (excluding the time spent by the initial execution of the DPCP heuristic), the number of explored nodes, and the final lower and upper bounds. 

\begin{table}[]
\footnotesize
\setlength{\tabcolsep}{4pt}
\centering
\begin{tabular}{lrrrrrllrrrrrr}
\toprule
instance & $|V|$ & $p$ & $n$ & $m$ & hval & \multicolumn{2}{c}{time (s)} & \multicolumn{2}{c}{nodes} & \multicolumn{2}{c}{lb} & \multicolumn{2}{c}{ub} \\
\cmidrule(lr){7-8} \cmidrule(lr){9-10} \cmidrule(lr){11-12} \cmidrule(lr){13-14}
 &  &  &  & &  & b\&p & gurobi & b\&p & gurobi & b\&p & gurobi & b\&p & gurobi \\
\midrule
\texttt{freq\_c18\_r20\_i0} & 257 & 0.22 & 94 & 18 & 4 & \textit{tilim} & \textbf{324.2} & 22799 & 2553 & 3.0 & 4.0 & 4 & 4 \\
\texttt{freq\_c18\_r20\_i1} & 212 & 0.25 & 86 & 18 & 4 & \textbf{24.9} & 86.4 & 503 & 3 & 3.0 & 3.0 & 3 & 3 \\
\texttt{freq\_c18\_r20\_i2} & 288 & 0.47 & 96 & 18 & 4 & \textit{tilim} & \textbf{214.1} & 9166 & 585 & 3.0 & 3.0 & 4 & 3 \\
\texttt{freq\_c18\_r20\_i3} & 231 & 0.27 & 87 & 18 & 4 & \textit{tilim} & \textbf{60.6} & 15707 & 1 & 3.0 & 3.0 & 4 & 3 \\
\texttt{freq\_c18\_r20\_i4} & 265 & 0.33 & 91 & 18 & 4 & \textit{tilim} & \textbf{51.9 }& 15044 & 1 & 3.0 & 3.0 & 4 & 3 \\
\midrule
\texttt{freq\_c12\_r35\_i0} & 570 & 0.65 & 111 & 12 & 5 & \textbf{231.3} & \textit{tilim} & 187 & 1 & 3.0 & 2.0 & 3 & 4 \\
\texttt{freq\_c12\_r35\_i1} & 229 & 0.45 & 68 & 12 & 4 & \textbf{22.2} & 110.2 & 261 & 253 & 3.0 & 3.0 & 3 & 3 \\
\texttt{freq\_c12\_r35\_i2} & 337 & 0.55 & 85 & 12 & 4 & \textbf{27.9} & 656.3 & 117 & 1 & 3.0 & 3.0 & 3 & 3 \\
\texttt{freq\_c12\_r35\_i3} & 449 & 0.58 & 104 & 12 & 4 & \textit{tilim} & \textit{tilim} & 2076 & 1 & 3.0 & 2.0 & 4 & \textbf{3} \\
\texttt{freq\_c12\_r35\_i4} & 269 & 0.46 & 75 & 12 & 4 & \textbf{24.6} & 330.3 & 183 & 904 & 3.0 & 3.0 & 3 & 3 \\
\midrule
\texttt{freq\_c9\_r50\_i0} & 297 & 0.72 & 62 & 9 & 3 & \textbf{0.7} & 707.3 & 3 & 1 & 3.0 & 3.0 & 3 & 3 \\
\texttt{freq\_c9\_r50\_i1} & 244 & 0.68 & 56 & 9 & 3 & 185.7 & \textbf{168.6} & 1265 & 124 & 3.0 & 3.0 & 3 & 3 \\
\texttt{freq\_c9\_r50\_i2} & 205 & 0.69 & 49 & 9 & 4 & \textbf{6.7} & 223.5 & 103 & 46 & 3.0 & 3.0 & 3 & 3 \\
\texttt{freq\_c9\_r50\_i3} & 276 & 0.70 & 62 & 9 & 3 & \textbf{0.6} & 429.0 & 3 & 36 & 3.0 & 3.0 & 3 & 3 \\
\texttt{freq\_c9\_r50\_i4} & 258 & 0.71 & 56 & 9 & 3 & \textbf{2.7} & 194.9 & 21 & 522 & 3.0 & 3.0 & 3 & 3 \\
\bottomrule
\end{tabular}
\caption{Per-instance results on the CFCP-FA dataset.}
\label{table.results.freq}
\end{table}

Overall, the results are consistent with the trends observed for the random DPCP instances. 
The compact formulation remains the preferred approach for low-density graphs, whereas B\&P performs better on medium- and high-density graphs. 
A notable difference is observed for the compact formulation at the largest coverage radius, which generates dense conflict graphs: unlike in the previous experiment, it solves all instances to optimality. 
Nevertheless, for these instances, B\&P still achieves substantially shorter running times in all but one instance.

An interesting characteristic of these instances is their small chromatic number: all but one require only three colors. 
The DPCP heuristic finds an optimal solution in 5 out of the 15 instances, a lower proportion than for the random DPCP instances (16 out of 31, excluding infeasible ones).
This suggests that the heuristic could be further improved for instances with this specific structure.

Tables \ref{table.results.open.neighborhoods} and \ref{table.results.closed.neighborhoods} report the results for the CFCP instances associated with open and closed neighborhoods, respectively. 
They follow the same format as the previous table.
These results also support the observed relationship between graph density and algorithm performance. 
The conflict graphs in these instances have densities ranging from 0.38 to 0.75, corresponding to low-medium to high-medium density graphs. 
In this range, B\&P achieves the best running times in most cases, with the only exceptions being the open-neighborhood instance \texttt{jean} and \texttt{queen6\_6}. 
Both algorithms reach the time limit or memory limit on \texttt{queen7\_7}, although B\&P obtains a stronger lower bound for the open-neighborhood instance.
The DPCP heuristic finds an optimal solution for 4 of the 9 open-neighborhood instances and 3 of the 9 closed-neighborhood instances.
Gurobi explores only a single node in all but one of the instances it solves, highlighting its strong heuristic capabilities, but this is not enough to outperform B\&P in running time.

\begin{table}[]

\begin{subtable}{\textwidth}
\centering
\footnotesize
\setlength{\tabcolsep}{5pt}
\begin{tabular}{lrrrrrllrrllrr}
\toprule
instance & $|V|$ & $p$ & $n$ & $m$ & hval & \multicolumn{2}{c}{time (s)} & \multicolumn{2}{c}{nodes} & \multicolumn{2}{c}{lb} & \multicolumn{2}{c}{ub} \\
\cmidrule(lr){7-8} \cmidrule(lr){9-10} \cmidrule(lr){11-12} \cmidrule(lr){13-14}
 &  &  &  &  &  & b\&p & gurobi & b\&p & gurobi & b\&p & gurobi & b\&p & gurobi \\
\midrule
\texttt{david} & 803 & 0.61 & 78 & 87 & 3 & \textbf{12.5} & 37.2 & 3 & 1 & 2.0 & 2.0 & 2 & 2 \\
\texttt{huck} & 599 & 0.46 & 72 & 74 & 6 & \textbf{16.7} & 622.1 & 11 & 1 & 2.0 & 2.0 & 2 & 2 \\
\texttt{jean} & 495 & 0.38 & 66 & 80 & 4 & 1738.7 & \textbf{306.6} & 1365 & 645 & 3.0 & 3.0 & 3 & 3 \\
\texttt{myciel3} & 40 & 0.39 & 11 & 11 & 2 & $<$0.1 & $<$0.1 & 1 & 1 & 2.0 & 2.0 & 2 & 2 \\
\texttt{myciel4} & 142 & 0.40 & 23 & 23 & 2 & \textbf{0.1} & 1.2 & 1 & 1 & 2.0 & 2.0 & 2 & 2 \\
\texttt{myciel5} & 472 & 0.39 & 47 & 47 & 2 & \textbf{3.1} & 61.0 & 1 & 1 & 2.0 & 2.0 & 2 & 2 \\
\texttt{queen5\_5} & 320 & 0.71 & 25 & 25 & 2 & \textbf{0.5} & 53.2 & 1 & 1 & 2.0 & 2.0 & 2 & 2 \\
\texttt{queen6\_6} & 580 & 0.66 & 36 & 36 & 3 & 2669.1 & \textbf{1161.5} & 41 & 1 & 2.0 & 2.0 & 2 & 2 \\
\texttt{queen7\_7} & 952 & 0.61 & 49 & 49 & 4 & \textit{tilim} & \textit{memlim} & 11 & 1 & \textbf{2.0} & 1.0 & 4 & 4 \\
\bottomrule
\end{tabular}
\caption{Open neighborhoods}
\label{table.results.open.neighborhoods}
\end{subtable}

\medskip

\begin{subtable}{\textwidth}
\centering
\footnotesize
\setlength{\tabcolsep}{5pt}
\begin{tabular}{lrrrrrllrrllrr}
\toprule
instance & $|V|$ & $p$ & $n$ & $m$ & hval & \multicolumn{2}{c}{time (s)} & \multicolumn{2}{c}{nodes} & \multicolumn{2}{c}{lb} & \multicolumn{2}{c}{ub} \\
\cmidrule(lr){7-8} \cmidrule(lr){9-10} \cmidrule(lr){11-12} \cmidrule(lr){13-14}
 &  &  &  &  &  & b\&p & gurobi & b\&p & gurobi & b\&p & gurobi & b\&p & gurobi \\
\midrule
\texttt{david} & 791 & 0.63 & 74 & 87 & 3 & \textbf{155.5} & 330.4 & 5 & 1 & 2.0 & 2.0 & 2 & 2 \\
\texttt{huck} & 455 & 0.52 & 49 & 74 & 4 & \textbf{8.2} & 123.4 & 7 & 1 & 2.0 & 2.0 & 2 & 2 \\
\texttt{jean} & 477 & 0.40 & 66 & 80 & 4 & \textbf{93.5} & 164.2 & 55 & 1 & 2.0 & 2.0 & 2 & 2 \\
\texttt{myciel3} & 51 & 0.56 & 11 & 11 & 2 & $<$\textbf{0.1} & 0.1 & 1 & 1 & 2.0 & 2.0 & 2 & 2 \\
\texttt{myciel4} & 165 & 0.50 & 23 & 23 & 3 & \textbf{3.5} & 7.2 & 19 & 1 & 2.0 & 2.0 & 2 & 2 \\
\texttt{myciel5} & 519 & 0.45 & 47 & 47 & 3 & \textbf{202.7} & 239.1 & 49 & 1 & 2.0 & 2.0 & 2 & 2 \\
\texttt{queen5\_5} & 345 & 0.75 & 25 & 25 & 2 & \textbf{0.6} & 66.3 & 1 & 1 & 2.0 & 2.0 & 2 & 2 \\
\texttt{queen6\_6} & 616 & 0.69 & 36 & 36 & 2 & \textbf{6.1} & 432.1 & 1 & 1 & 2.0 & 2.0 & 2 & 2 \\
\texttt{queen7\_7} & 1001 & 0.63 & 49 & 49 & 4 & \textit{tilim} & \textit{memlim} & 1 & 1 & -- & \textbf{1.0} & 4 & 4 \\
\bottomrule
\end{tabular}
\caption{Closed neighborhoods}
\label{table.results.closed.neighborhoods}
\end{subtable}

\caption{Per-instance results on the CFCP-ON and CFCP-CN datasets}

\end{table}

\section{Conclusions and future work}
\label{sec:conclusions}

The Double-Partition Coloring Problem (DPCP) was introduced in \cite{LucciEtAl2025} as a generalization of several coloring problems on graphs and hypergraphs. In this work, we complete the proofs establishing that the List Coloring Problem (LCP) and the Conflict-Free Coloring Problem (CFCP) reduce to the DPCP. In addition, we develop the details of a branch-and-price (B\&P) algorithm for solving a set covering formulation of the DPCP. Together with the compact formulation presented in our previously mentioned work, which is solved by a state-of-the-art IP solver in its standard configuration, these exact approaches provide an effective framework for addressing a wide range of coloring problems. To the best of our knowledge, they are the first exact methods to be proposed and computationally tested on CFCP instances.

The development of a B\&P scheme revealed several structural properties of the DPCP, two of which are worth highlighting here. First, the feasibility of a DPCP instance is characterized in terms of the stability number of a polynomial-sized auxiliary graph. %: an instance $(G,\PP,\QQ)$ is feasible if and only if $\alpha(\tilde G)\geq|\PP|$ (Proposition~\ref{Gtilde}).} 
Second, general DPCP instances cannot be solved independently on each connected component of the underlying graph. We characterize the instances for which such a decomposition is possible.

Greedy and semi-greedy heuristics for the DPCP were proposed following one-step and two-step schemes. 
%Both approaches iteratively construct a solution. In the one-step scheme, a vertex is selected and colored at each iteration, whereas in the two-step scheme, a vertex is selected at each iteration in the first phase, and all selected vertices are colored in the second phase. 
%As usual, our B\&P algorithm performs three main steps at each node of the search tree: initialization, pricing, and branching.
%The initialization may comprehend up to three phases: (i) primal one-step and two-step heuristics are run (with different configurations at the root node and at the other nodes) in order to try to find feasible solutions of good quality; 
%(ii) in case the heuristics fail, we try to solve the instance of the MSSP described in Proposition~\ref{Gtilde} with a prescribed time limit, to either obtain a feasible solution or prove infeasibility of the DPCP instance; (ii) in case the heuristics fail, we resort to computing a basic feasible solution of $\setcoveringformLRM$, a parameterized linear relaxation of \setcoveringform, followed by the application of Remark~\ref{remark.M}  to either obtain a feasible start basis for the column generation algorithm or prove infeasibility of the DPCP instance; (iii) finally, in  case this latter approach fails, too, we proceed with a standard two-phase solution of the linear relaxation of \setcoveringform. However, the third step has not been reached in any of the computational tests. 
%A one-step heuristic and three two-step heuristics with different vertex-selection criteria (\textsc{Deg}, \textsc{QDeg}, and \textsc{SQDeg}) were tested.
In our experimental setting, the two-step heuristics outperformed the one-step heuristic, finding good solutions for nearly all feasible instances with average running times below 2 ms. Introducing randomness improved the solution values by more than 20\% within a few seconds. When used to initialize B\&P, one semi-greedy two-step heuristic found solutions within one unit of the optimum for 30 of 31 feasible instances and optimal solutions for half of these instances. These results highlight the effectiveness of the proposed heuristics both as stand-alone methods and as initialization procedures for B\&P.

%{\color{red} decir algo que la fase 1, desarrollo de heuristicas, puede ser considerado una herramienta mas para resolver en forma aproximada instancias de ...y decir que tienen buena performance ya que alcanzan la optima en tabto por ciento y el gap es a lo sumo tanto?}

%{\color{red} It is proved that} 
%The pricing problem is NP-hard, as it can be reduced from the decision version of the WSSP. 
After showing that the pricing problem is NP-hard, 
we propose an integer programming formulation and three heuristic solution strategies. 
%On one hand, we devise a semi-greedy randomized heuristic based on constructing a stable set by repeatedly selecting a vertex of maximum weight and updating the weights of the remaining vertices. On the other hand, we propose two heuristics based on bounding the objective function of \pricingform\ (from above and below) with functions that can be expressed as sums of (constant) vertex weights. Thus, the solution of the corresponding instances of the WSSP may either generate new columns or provide evidence for safely stopping the column generation process.}
%The linear relaxations of all instances were solved via column generation in order to test five different configurations for addressing the pricing subproblem: the exact solution of \pricingform\ with the CPLEX solver, as baseline, followed by the incremental activation of the semigreedy heuristic, and the two WSSP-based pricing algorithms, running either separately or jointly. 
In the computational experiments, the best overall performance was obtained when all three pricing heuristics were activated. 

The performance of the B\&P algorithm was compared with the direct solution of the compact formulation (using a state-of-the-art solver) on one set of randomly generated DPCP instances and on three sets of CFCP instances (frequency assignment, open neighborhoods and closed neighborhoods). The results show that both approaches complement each other on random DPCP instances and frequency assignment instances: the B\&P tends to perform better when the edge density of the graph is high, while the compact formulation performs better on low-density graphs. For CFCP instances arising from open and closed neighborhoods in graphs, 
%which have low-to-medium and medium-to-high densities and larger numbers of vertices, 
the B\&P algorithm outperforms the compact formulation. 
%Taken together, the two approaches provide an effective framework for solving a wide range of coloring problems, including the Conflict-Free Coloring Problem, for which, to the best of our knowledge, no previous computational studies have been reported.

%Moreover, t
These approaches could be extended to effectively solve a broader class of DPCP instances through further work on specific aspects of the B\&P algorithm.

%{\color{red} FAlta una conclusion fuerte de que proveemos la primer herramienta exacta para resolver CFCP, mas aun, para el DPCP que generaliza y que posibles esudios mas profundos pueden derivar en una herramienta eficiente unica para todos los problemas que generaliza.

%En partiuclar, mencionamos algunos de los aspectos que pueden ser estudiados/analizados. }

Improving the branching scheme could have a significant impact on the performance of the algorithm. 
%Instead of resorting to a VCP solver when $\card{P}=1$ holds for all $P \in \PP$ in a fractional solution at a B\&P node, a second branching rule can be implemented, along the lines proposed in \cite{Furini2018}.
One possible research direction is to introduce an alternative branching rule, as proposed by \citet{Furini2018} for the PCP, which covers the cases 
%$\card{P}=1$ for all $P \in \PP$
when all parts in the first partition of the instance have cardinality equal to one, where our current approach resorts to solving a VCP instance with a separate B\&P procedure.
Moreover, a non-dichotomous rule combining our current rule with the classical VCP branching rule could be explored as an alternative.
 
A further research direction consists of improving the pricing procedure. This could be achieved, for instance, by strengthening the pricing ILP formulation or developing a combinatorial enumeration algorithm, as proposed by \citet{Held2012} for the VCP.

\color{black}

\appendix

\section{\setcoveringform\ vs. extended formulations for PCP and LCP}

In this appendix, we characterize the relationship between \setcoveringform\ and previously proposed set covering formulations for the PCP and the LCP.

\begin{proposition}\label{Prop.Equiv.Formulations}
\setcoveringform\ is equivalent to the formulation $\mathrm{ILP}^{\mathrm{E}}$ proposed in \cite{Furini2018} for the PCP, and to the formulation WLCP-SC proposed in \cite{Lucci2024} for the LCP, in the following sense:
\begin{enumerate}[label=(\alph*)]
    \item \label{prop:formulations:a}
    Let $I=(G,\PP)$ be an instance of the PCP, with $G=(V,E)$, and let $I'=(G,\PP,\QQ)$ be the corresponding DPCP instance with $\QQ=\{\{v\}:v\in V\}$. Then, formulation $\mathrm{ILP}^{\mathrm{E}}$ for $I$ coincides with formulation \setcoveringform\ for $I'$.

    \item \label{prop:formulations:b}
    Let $I=(G,C,\{L_v:v\in V\})$ be an instance of the LCP, with $G=(V,E)$, and assume that $I$ contains no indistinguishable colors, i.e., that the sets
    $V^k=\{v\in V:k\in L_v\}$
    are distinct for all $k\in C$. Let $I'$ be the corresponding DPCP instance constructed as described in Section~\ref{sec:definition:dpcp}. Then, formulation WLCP-SC for $I$ coincides with formulation \setcoveringform\ for $I'$, provided that all color weights are equal to one.
\end{enumerate}
\end{proposition}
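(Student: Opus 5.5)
The plan is to write \setcoveringform\ explicitly for the two special DPCP instances and check, term by term, that the variable sets, objectives and constraint systems coincide with those of $\mathrm{ILP}^{\mathrm{E}}$ and WLCP-SC. Throughout, I use the convention stated after \eqref{SCFormulationB2}: constraints \eqref{SCFormulationB} are kept only for parts $Q$ with $|Q|\ge 2$.

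For part \ref{prop:formulations:a}, take $\QQ=\{\{v\}:v\in V\}$. Then every part of $\QQ$ is a singleton, so all constraints \eqref{SCFormulationB2} disappear. What remains is: minimize $\sum_S x_S$ over stable sets of $G$, subject to $\sum_{S:\,S\cap P\neq\emptyset}x_S\ge 1$ for each $P\in\PP$, with $x$ binary. I will recall $\mathrm{ILP}^{\mathrm{E}}$ from \cite{Furini2018}. It has one binary variable per stable set of $G$, the objective counts the stable sets used, and for each part it imposes the covering constraint on stable sets intersecting that part. So the two formulations coincide literally. If Furini et al.\ restrict to stable sets meeting each part at most once, Remark~\ref{clique-estable} handles this: we assumed each $P$ is a clique, so every stable set meets each $P$ in at most one vertex, and the column sets agree.

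For part \ref{prop:formulations:b}, take $\tilde G$, $\PP=\{P_v\}$ and $\QQ=\{Q_k\}$ as in Section~\ref{sec:definition:dpcp}. The key step is to characterize $\STAB(\tilde G)$. Since $(v_1,k_1)(v_2,k_2)\in\tilde E$ whenever $k_1\neq k_2$, every stable set of $\tilde G$ lies inside a single $Q_k$. Inside $Q_k$, two vertices are adjacent exactly when $v_1v_2\in E$. Hence the nonempty stable sets of $\tilde G$ are in bijection with the pairs $(k,S)$, where $S\subseteq V^k$ is a nonempty stable set of $G$, via $(k,S)\mapsto\{(v,k):v\in S\}$. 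This is exactly the variable set of WLCP-SC: one variable $x_{k,S}$ per color $k$ and stable set $S\subseteq V^k$. Under this bijection:
\begin{itemize}
\item the objective $\sum x_S$ becomes $\sum_{k,S}w_k x_{k,S}$ with $w_k=1$;
\item the constraint for $P_v$ becomes $\sum_{k,S:\,v\in S}x_{k,S}\ge 1$, which is the covering constraint of WLCP-SC;
\item the constraint for $Q_k$ becomes $\sum_{S}x_{k,S}\le 1$, which is the ``each color used at most once'' constraint of WLCP-SC.
\end{itemize}
One subtlety concerns singleton parts $Q_k$. If $|Q_k|=1$, the $Q_k$ constraint is dropped in \eqref{SCFormulationB2}. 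I need to check that WLCP-SC either omits it as well, or that it is redundant there by the same argument the paper gives after \eqref{SCFormulationB}.

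The main obstacle is not the algebra but the exact form of the cited formulations. WLCP-SC in \cite{Lucci2024} is stated after merging indistinguishable colors, which is why the hypothesis that the sets $V^k$ are pairwise distinct appears. I will use this hypothesis to argue that no two DPCP columns collapse onto a single WLCP-SC column, and vice versa. Concretely, distinct $k$ give distinct $Q_k$, so the map from $(k,S)$ to columns $(x^P,-x^Q)$ is injective, and the correspondence is a genuine identity of formulations rather than an equivalence up to aggregation.
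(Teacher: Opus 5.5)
Your proposal is correct and follows essentially the same route as the paper. In (a) all parts of $\QQ$ are singletons, so constraints \eqref{SCFormulationB2} vanish. In (b) you use the same bijection between nonempty stable sets of $\tilde G$ (each lying in a single $Q_k$) and pairs $(S,k)$ with $S\in\STAB(G_k)$, and then match the objective and constraints term by term.

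Two small points to tighten:
\begin{itemize}
\item The distinctness hypothesis is used, as in the paper, to make the color classes of WLCP-SC singletons, so that its variables are indexed by $C$ and the right-hand sides $m(k)$ equal one. Injectivity of $(k,S)\mapsto$ column holds even without the hypothesis.
\item A singleton $Q_k=\{(v,k)\}$ only produces the bound $x_{\{(v,k)\}}\le 1$ on both sides, so the subtlety you flagged is harmless.
\end{itemize}
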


\begin{proof}
For part~\ref{prop:formulations:a}, observe that $|Q|=1$ holds for every $Q\in\QQ$ in instance $I'$. Therefore, the family of constraints~\eqref{SCFormulationB2} is empty, and \setcoveringform\ coincides with the formulation $\mathrm{ILP}^{\mathrm{E}}$ proposed in \cite{Furini2018}.

For part~\ref{prop:formulations:b}, following the construction introduced in Section~\ref{sec:definition:dpcp}, let $I'$ denote the DPCP instance $(\tilde G,\PP,\QQ)$, where $\tilde G=(\tilde V,\tilde E)$ is the graph with vertex set
$\tilde V=\{(v,k):v\in V,\ k\in L_v\}$,
and edge set $\tilde E$ such that $(v_1,k_1)(v_2,k_2)\in\tilde E$ if and only if either $v_1v_2\in E$ or $k_1\neq k_2$. Moreover, 
$\PP = \{P_v:v\in V\}$ and  
$\QQ = \{Q_k:k\in C\}$, where
$P_v=\{(v',k)\in\tilde V: v'=v\}$ and 
$Q_k=\{(v,k')\in\tilde V: k' = k\}$.

For each $k\in C$, let $G_k=G[V^k]$ be the subgraph of $G$ induced by $V^k$.
Since $I$ has no indistinguishable colors, the set $K$ of equivalence classes introduced in \cite{Lucci2024} coincides with $C$, and $m(k)=1$ for every $k\in C$. Consequently, the decision variables of WLCP-SC are indexed by
$X=\{(S,k):k\in C,\ S\in\STAB(G_k)\}$.
We first establish a bijection between $X$ and $\STAB(\tilde G)$.

Let $\tilde S\in\STAB(\tilde G)$ with $\tilde S\neq\emptyset$. Then there exists a unique $k\in C$ such that $\tilde S\subseteq Q_k$. Indeed, if $(v_1,k_1),(v_2,k_2)\in\tilde S$ with $k_1\neq k_2$, then $(v_1,k_1)$ and $(v_2,k_2)$ are adjacent in $\tilde G$, contradicting the stability of $\tilde S$. Define $S=\{v\in V:(v,k)\in\tilde S\}$.
Then $S\subseteq V^k$ and $S\in\STAB(G_k)$, since adjacency in $G_k$ would imply adjacency in $\tilde G$. Hence, $(S,k)\in X$.

Conversely, let $(S,k)\in X$ and define
$
\tilde S=\{(v,k):v\in S\}
$.
%\textcolor{blue}{Note that each $\tilde{S}$ is unique because $I$ contains no indistinguishable colors.} 
Then $\tilde S\subseteq\tilde V$, and $\tilde S\in\STAB(\tilde G)$, since adjacency in $\tilde G$ would imply adjacency in $G_k$, contradicting the stability of $S$.

Thus, there is a bijection between the variables $x_{\tilde S}$ in \setcoveringform\ and the variables $x_S^k$ in WLCP-SC.
Moreover, under this correspondence, the condition $\tilde S\cap P_v\neq\emptyset$ is equivalent to $v\in S$, so constraints~\eqref{SCFormulationA} correspond to constraints~(1) of WLCP-SC. Similarly, the condition $\tilde S\cap Q_k\neq\emptyset$ is equivalent to $S\in\STAB(G_k)$, and since $m(k)=1$, constraints~\eqref{SCFormulationB} correspond to constraints~(2) of WLCP-SC.

Finally, when all color weights in WLCP-SC are equal to one, the objective functions of the two formulations coincide.
\end{proof}

\section*{Declaration of generative AI and AI-assisted technologies in the manuscript preparation process}

During the preparation of this work the authors used ChatGPT and Claude in order to improve language and readability. After using this tool/service, the authors reviewed and edited the content as needed and take full responsibility for the content of the published article.

\bibliographystyle{model5-names} 
\bibliography{bibliography}

\newpage

%%%%%%%%%%%%%%%%%%%%%%%%%%%%%%%%%%%%%%%%%%%%%%

%% else use the following coding to input the bibitems directly in the
%% TeX file.

%% Refer following link for more details about bibliography and citations.
%% https://en.wikibooks.org/wiki/LaTeX/Bibliography_Management

% \begin{thebibliography}{00}

% %% For numbered reference style
% %% \bibitem{label}
% %% Text of bibliographic item

% \bibitem{lamport94}
%   Leslie Lamport,
%   \textit{\LaTeX: a document preparation system},
%   Addison Wesley, Massachusetts,
%   2nd edition,
%   1994.

% \end{thebibliography}

\end{document}